    \newcommand\email[1]{\_email #1\q_nil}
    \def\_email#1@#2\q_nil{%
      \href{mailto:#1@#2}{{\emailfont #1\emailampersat #2}}
    }
    \newcommand\emailfont{\sffamily}
    \newcommand\emailampersat{{\color{red}\small@}}
\theoremstyle{plain}
\newtheorem{thm}{Theorem}[section]
\newtheorem*{thm*}{Theorem}
\newtheorem{lemma}[thm]{Lemma}
\newtheorem{cor}[thm]{Corollary}
\newcommand{\be}{\begin{equation}} 
\newcommand{\ee}{\end{equation}}
\newcommand{\sa}{\Sigma}
\newcommand{\M}{\mathbf{Meas}}
\newcommand{\Mc}{\iota}
\newcommand{\C}{\mathbf{Cvx}}
\newcommand{\B}{\mathcal{B}}
\newcommand{\G}{\mathcal{G}}   
\newcommand{\T}{\mathcal{P}}   
\newcommand{\U}{\mathcal{U}}
\newcommand{\F}{\mathcal{F}}
\newcommand{\mS}{\mathcal{S}}
\newcommand{\I}{I}    
\newcommand{\Rbar}{I}                         
\newcommand{\Ri}{\mathcal{R}^{\iota}}
\newcommand{\Xit}{(X \! \! \downarrow \! \! \iota)}
\newcommand{\Set}{\mathbf{Set}}
\author{Kirk Sturtz}
\title{Categorical Probability Theory}
\begin{document}
 \maketitle

\begin{abstract}     We present a categorical viewpoint of probability measures by showing that a probability measure  can  be  viewed as a weakly averaging affine measurable functional taking values in the unit interval  which preserves limits.    The probability measures on a space are the elements of a submonad  of a double dualization monad on the category of measurable spaces into the unit interval,  and this monad is naturally isomorphic to the Giry monad.  We show this submonad is the codensity monad of a functor from the category of convex spaces to the category of measurable spaces.   A theorem proving the integral operator acting on the space of measurable functions and the space of probability measures on the domain space of those functions is given using the strong monad structure of the Giry monad.
\end{abstract}
 
 \section{Introduction}  

In the paper \emph{Codensity and the ultrafilter monad} \cite{Leinster} discusses how the ultrafilter monad $\U$ on the category of sets, $\Set$, is a subfunctor of the double dualization monad $\Set( 2^{\bullet}, 2)$ where we use the exponent notation, $2^X = \Set(X,2)$ for every object $X$ in $\Set$, for brevity.   In this situation an ultrafilter $\F \in \U(X)$ can be seen as a functional mapping parts of $X$ into $2$ which satisfies the condition of being a finitely additive probability measure.  Thus an ultrafilter is a primitive sort of probability measure which is ``deterministic'' as  it assumes only the values $0$ or $1$. 
This basic construction can be generalized to the category of measurable spaces $\M$ so that a probability measure $P$ can be viewed as a measurable function $P \in \M( 2^X, \I)$ where $\I=[0,1]$ endowed with the Borel $\sigma$-algebra generated by the open intervals,  and $2^X = \M(X, 2)$  where $2$ has the discrete $\sigma$-algebra.   Because the characteristic functions on $X$ completely determine the set of measurable functions on $X$, by endowing the space $\I^X=\M(X,\I)$ with the  $\sigma$-algebra generated by all  the evaluation maps $ev_x: \I^X \rightarrow \I$, a probability measure $P$ on $X$ can be  viewed as an element of the double dualization monad $\M(\I^{\bullet}, \I)$ at $X$.   Thus, analogously   with the ultrafilter, we are led to construct a  submonad $\T$ of the  double dualization monad $\M(\I^{\bullet}, \I)$ so that a probability measure on $X$ corresponds to an element in $\T(X)$.   This (sub)monad $\T$ is naturally isomorphic to the Giry monad $\G$ where $\G(X)$ is also the space of all probability measures on $X$ \cite{Giry} and, for $f:X \rightarrow Y$ a measurable function, $\G(f): \G(X) \rightarrow \G(Y)$ is the push forward map, defined for all $P \in \G(X)$ by $P \mapsto P f^{-1}$.  In this introductory section we outline this procedure.

  To define this monad $\T$ componentwise we first recall that a functional $G: \I^X \rightarrow \I$ is \textbf{weakly averaging} when,  for all constant functions $\overline{u} \in \I^X$ with value $u=\overline{u}(x) \in \I$  for all $x \in X$, $G$ satisfies $G(\overline{u})=u$.\footnote{This terminology is taken from \emph{Sets for Mathematics} \cite{LR} who specifically address the double dualization  process and subfunctors thereof into objects with extra structure.} 
Also, as  $\I$ has a natural convex structure defined by $u +_{\alpha} v = \alpha u + (1-\alpha) v$ for all $\alpha \in \I$, the function space $\I^X$ has a convex structure defined on it pointwise.  Thus for $f,g \in \I^X$ the ``convex sum''   $f +_{\alpha} g$ is defined pointwise for each $x \in X$  by $f(x) +_{\alpha} g(x)$ making $f +_{\alpha} g \in \I^X$.   
The functional $G:\I^X \rightarrow \I$ is \textbf{affine}\footnote{The set of all affine morphisms $\I^X \rightarrow \I$ is also an object in the category of convex spaces, $\C$, where the morphisms have  been called  affine, affine linear, convex linear, as well as convex.} when $G(f +_{\alpha} g) = G(f) +_{\alpha} G(g)$ for all $f, g \in \I^X$ and all $\alpha \in \I$.  The functional $G:\I^X \rightarrow \I$ \textbf{preserves limits}  if for all $f \in \I^X$ the condition \mbox{$G(f) = \sup_{\psi \in \I^X}  \{G(\psi) \, | \, \psi \textrm{ simple}, \, \psi \le f \}$} is satisfied where $\psi \le f$ is pointwise inequality, $\psi(x) \le f(x)$ for all $x \in X$.   The property of preservation of limits plays the role of extending the affine property of  $G$, \mbox{$G(f +_{\alpha} g)= G(f) +_{\alpha} G(g)$},   
to a countably affine condition $G( \sum_{i=1}^{\infty} \alpha_i f_i) = \sum_{i=1}^{\infty} \alpha_i G(f_i)$ where $\lim_{i=1}^{\infty} \{ \sum_{j=1}^i \alpha_i\}=1$.  

Defining the monad $\T$ on objects, at  component $X$, by
\be \nonumber
\T(X) = \{ \I^X \stackrel{G}{\longrightarrow} \I \, | \, G \textit{ is weakly averaging, affine, and preserves limits} \}
\ee
along with the obvious corresponding map on arrows, 
we show this  monad is naturally isomorphic to the Giry monad $\G$.   For $\C$ the category of convex spaces and affine morphisms, we construct a functor $\iota:\C \rightarrow \M$ and show that the right Kan extension of $\iota$ along $\iota$ is the monad $\T$.  This functor $\iota$ is itself a subfunctor of a double dualization functor using the unit interval $\I$, which is an object in both of the categories, $\C$ and $\M$.

In proving the equivalence of the monads, $\T$ and $\G$,  we invoke the work of \cite{Kock} to prove the result that the integral operator 
\be \nonumber
 \begin{tikzpicture}[baseline=(current bounding box.center)]
 	\node	(IX) 	at	(0,0)              {$\I^X \otimes \G(X)$};
	\node         (I)  at    (4,-0)           {$\I$};
	\node         (f)   at    (0,-1)          {$(P,f)$};
	\node         (Pf)    at   (4,-1)        {$\int_X f \, dP$};
	
	\draw[->,above] (IX) to node {$\int_X \_ \, d\_$} (I);

	\draw[|->, right] (f) to node  [yshift=0pt]{$$} (Pf);
	
 \end{tikzpicture}
\ee
is in fact a measurable function, and the correspondence between the Giry monad $\G$ and the monad $\T$ amounts to saying there exist a correspondence sending a probability measure $P \in \G(X)$ to the affine weakly averaging functional $\int_X \_ \, dP$ which preserves limits, and conversely.  This result follows from the SMCC structure of $\M$ and the fact that the Giry monad is a strong monad.\footnote{Here we only prove the aspect of this property which is needed for our purposes.} 
  
In a previous version of this article the authors failed to include the condition of preserving limits which was brought to our attention  by Tom Avery, who has written an alternative viewpoint of the Giry monad as a codensity monad explicitly using integral operators which avoids using the SMCC arguments used here \cite{Avery}.  The reason we have chosen the more abstract and general approach is that we are concerned  with extending  the ideas of probability theory to categories other than $\M$  where the  subfunctor of the double dualization monad consisting of weakly averaging affine functionals which preserve limits is not a commutative monad  leading to noncommutative probability.   Further aspects to this, many of which should be evident from this work,  are left to future publications.

This paper is organized to sequentially show
\begin{enumerate}[noitemsep]
\item[(i)]  Both $\C$ and $\M$ are  symmetric monoidal closed categories (SMCC). 
\item[(ii)] For every measurable space $X$ the set of weakly averaging affine morphisms which preserve limits,  $\T(X)$,  is  isomorphic to $\G(X)$ as \emph{convex} spaces.
\item[(iii)]  This isomorphism of convex spaces extends to an  isomorphism of monads,  $\G \cong  \T$.
\item[(iv)]  There exist a  functor $\iota: \C \rightarrow \M$. 
\item[(v)]  The  monad $\T$ is a right Kan extension of $\iota$ along $\iota$. 
\end{enumerate}

\paragraph{Notation} Unless specifically defined otherwise, the symbols $X, Y,$ and $Z$ always denote measurable spaces while $A, B$, and $C$ always denote convex spaces.     The symbol $\I$ denoting the unit interval is of course both a measurable space and a convex space.  Although $\I^X = \M(X,\I)$ we still use the notation $\M(X,\I)$ when we want to think of a map $f:X \rightarrow \I$ as an actual function rather than an element in the function space $\I^X$. Formally, using the SMCC structure of $\M$, these two spaces $\I^X$ and $\M(X,\I)$ are only isomorphic and hence the distinction.  In the last section where a slice category is also used, in addition to the categories $\M$ and $\C$, it is convenient to use the notation ``$X \in_{ob} \mathcal{C}$'' to denote an object in the category $\mathcal{C}$ and ``$f \in_{ar} \mathcal{C}$'' to denote an arrow in the category $\mathcal{C}$.  For an object $X$ in any category the identity arrow on $X$ is denoted $id_X$.  The notation $\overline{u}$ is used to denote a constant function with value $u$ lying in the codomain of the function $\overline{u}$.

\section{The categories of interest}
The two main categories of interest are $\M$ and $\C$.  While most of the categorical properties of $\M$ are well known the fact that $\M$ is a SMCC is apparently not well know and hence we give an overview of this fact.\footnote{We are not aware of this fact in the literature though it would be surprising that it is not known as its construction is similar to that used in topology.}    We first provide a brief summary of $\C$ which is also a SMCC  \cite{Meng}, and provide a brief overview of that construction.   A more detailed description of the category of convex spaces can be found in  \cite{Fritz} who provides  definitions with numerous examples and highlights the difference between geometric and combinatorial convex spaces.  

\subsection{The category of convex spaces.}

A convex space $(A,+)$ consist of a set $A$ and a function 
\begin{equation}  \nonumber
\begin{array}{ccc}
A \times A \times I & \longrightarrow & A \\
(a_1,a_2,r) & \mapsto & a_1 +_r a_2
\end{array}
\end{equation}
satisfying the following axioms\footnote{In defining the convex structure of $A \times A$  an alternative representation, of the form
 $\sum_{i=1}^n r_i a_i$ with the relation $\sum_{i=1}^n r_i = 1$,  can be used rather than the free representation.  In this case the convex sum 
  $\sum_{i=1}^n r_i a_i=( \ldots ((a_1 +_{s_1} a_2) +_{s_2} a_3) + \ldots +_{s_{n-1}} a_n)$, where the elements $s_i \in \I$ for all $i=1,\ldots,n-1$ determine the coefficients $r_i$ and vice versa as these two representations are, assuming the $r_i \not = 0$, easily computed recursively.} 
\begin{equation}  \nonumber
\begin{array}{l}
(1)  \, a_1 +_0 a_2 = a_2 \\
(2) \, a +_r a = a  \\
(3) \, a_1 +_r a_2 = a_2 +_{1-r} a_1 \\
(4) \,  (a_1 +_p a_2) +_q a_3 = a_1 +_{pq} (a_2 +_r a_3) \quad \textrm{for }  r = \left\{ \begin{array}{ll} \frac{(1-p) q}{1-(pq)} & \textrm{if }pq \ne 1 \\ arbitrary & \textrm{if }p=q=1 \end{array} \right.
\end{array}
\end{equation}  
The convex structure of the convex space $\I$  is defined by $a_1 +_{r} a_2=r a_1 + (1-r) a_2$ for all \mbox{$a_1,a_2,r \in \I$}.

 An affine morphism of convex spaces $f:(A,+) \rightarrow (B,\oplus)$ satisfies
\be \nonumber
f( a_1 +_r a_2) = f(a_1) \oplus_r f(a_2).
\ee
These objects and morphisms determine the category of convex spaces $\C$.
If $A$ and $B$ are convex spaces we denote the set of all affine morphisms from $A$ to $B$ by $\C(A,B)$. 

The notation  $(a_1 +_{r_1} a_2)+_{r_2} a_3$ is   the \emph{free representation}  of ``convex sums'', wherein the values $r_1,r_2  \in \I$ have no further relationship.  The alternative and more familiar notation $\sum_{i=1}^3 s_i a_i$, subject to $\sum_{i=1}^3$, is the \emph{barycentric representation}.  The relationship between these two representations is elementary and we freely use both representations in this paper.

\subsection{The symmetric monoidal closed structure of $\C$.} \label{tensorConstruction}   

The unit of the SMCC structure on $\C$ is the object $1 = \{\star\}$ with the only possible convex structure.  The construction of the tensor product and function spaces in $\C$ is virtually identical to the construction employed in the category of modules over a ring, $\mathbf{R}$-$\mathbf{Mod}$.  Hence we limit ourself to reminding the reader of the basic construction.

The tensor product of two convex spaces $A \otimes B$  is obtained by taking the free convex structure on $A \times B$ and then taking the smallest congruence relation on this set such that
\be \nonumber
\sum_{i=1}^n \alpha_i (a_i, b) \equiv (\sum_{i=1}^n \alpha_i a_i, b)  \quad \textrm{and} \quad \sum_{i=1}^n \alpha_i (a, b_i) \equiv (a, \sum_{i=1}^n \alpha_i b_i). 
\ee
This tensor product $A \otimes B$ is universal in the sense that if $C$ is any convex space and $f:A \times B \rightarrow C$ is a bi-affine function (affine in each variable), then there exist a unique affine morphism $\hat{f}$ such that the diagram 

\be \nonumber
 \begin{tikzpicture}[baseline=(current bounding box.center)]
 	\node	(AxB)	at	(0,0)              {$A \times B$};
	\node	(AB)	at	(2,0)	               {$A \otimes B$};
	\node	(C)	at	(2,-1.8)               {$C$};

	\draw[->, above] (AxB) to node  {$$} (AB);
	\draw[->,left] (AxB) to node [xshift=-4pt,yshift=0pt] {$f$} (C);
	\draw[->,right] (AB) to node [xshift = 1pt] {$\hat{f}$} (C);

 \end{tikzpicture}
\ee
\noindent
commutes.

The convex structure on $B^A = \C(A,B)$ is defined pointwise.  If $f,g \in \C(A,B)$ then $(f +_{\alpha}g)(a) = f(a) +_{\alpha} g(a)$.  

Using these definitions the defining property of a closed monoidal category, $\_ \otimes B \vdash \_^B$ for all convex spaces $B$,  follows.  The symmetry follows from the construction of the tensor product $\otimes$.

\subsection{The symmetric monoidal closed structure of $\M$.}  

Throughout this section, $X$ and $Y$ denote measurable spaces.
The category $\M$  is a SMCC with the tensor product $X \otimes Y$  defined by the coinduced (final) $\sigma$-algebra such that all the  graph functions
\be  \nonumber
\begin{array}{ccccc}
\Gamma_f &:& X & \longrightarrow & X \times Y \\
&:& x & \mapsto & (x,f(x))
\end{array}
\ee 
for $f:X \rightarrow Y$ a measurable function, as well as the graph functions
\be  \nonumber
\begin{array}{ccccc}
\Gamma_g &:& Y & \longrightarrow & X \times Y \\
&:& y & \mapsto & (g(y),y)
\end{array}
\ee
for $g:Y \rightarrow X$ a measurable function, are measurable. 
 
Let $Y^X$ denote the set of all measurable functions from $X$ to $Y$ endowed with the $\sigma$-algebra induced by the set of all point evaluation maps\footnote{This is equivalent to saying $Y^X$ has the product $\sigma$-algebra induced by the coordinate projection maps onto $Y$.} 
\be \nonumber
\begin{array}{ccc}
Y^X &\stackrel{ev_x}{\longrightarrow}& Y \\
\ulcorner f \urcorner & \mapsto & f(x) 
\end{array}
\ee
\noindent
where the  notation  $\ulcorner f \urcorner$ is used to distinguish between the measurable function \mbox{$f:X \rightarrow Y$} and the point \mbox{$\ulcorner f \urcorner:1 \rightarrow Y^X$} of the function space $Y^X$.  
After establishing the SMCC property we drop the distinction as it is common practice to let the context define which arrow we are referring to.   

   Because the $\sigma$-algebra structure on tensor product spaces is defined  such that the  graph functions are all measurable, it follows in particular the constant graph functions \mbox{$\Gamma_{\ulcorner f \urcorner}:X \rightarrow X \otimes Y^X$} sending $x \mapsto (x,\ulcorner f \urcorner)$ are measurable.

Define the evaluation function
 \be \nonumber
 \begin{array}{ccc}
 X \otimes Y^X & \stackrel{ev_{X,Y}}{\longrightarrow}& Y \\
 (x,\ulcorner f \urcorner) & \mapsto & f(x)
 \end{array}
 \ee
and observe that for every $\ulcorner f \urcorner  \in Y^X$ the right hand diagram in the $\M$ diagrams

\be  \nonumber
 \begin{tikzpicture}[baseline=(current bounding box.center)]
 	\node	(X)	at	(0,-1.8)              {$X \cong X \otimes 1$};
	\node	(XY)	at	(0,0)	               {$ X \otimes Y^X$};
	\node	(Y)	at	(3,0)               {$Y$};
         \node         (1)    at      (-4,-1.8)             {$1$};
         \node         (YX) at      (-4,0)              {$Y^X$};

	\draw[->, left] (X) to node  {$\Gamma_{\ulcorner f \urcorner}\cong  Id_X \otimes \ulcorner f \urcorner$} (XY);
	\draw[->,below, right] (X) to node [xshift=3pt,yshift=-1pt] {$f$} (Y);
	\draw[->,above] (XY) to node {$ev_{X,Y}$} (Y);
	\draw[->,left] (1) to node {$\ulcorner f \urcorner$} (YX);

 \end{tikzpicture}
 \ee
\noindent 
is commutative as a set mapping, $f = ev_{X,Y} \circ \Gamma_{\ulcorner f \urcorner}$.   
Rotating the above diagram and also considering the constant graph functions $\Gamma_{x}$,  the right hand side of the  diagram 
\be \nonumber 
 \begin{tikzpicture}[baseline=(current bounding box.center)]
        \node          (X)    at      (-3,0)             {$X$};
 	\node	(YX)	at	(3,0)              {$Y^X$};
	\node	(XY)	at	(0,0)	               {$ X \otimes Y^X$};
	\node	(Y)	at	(0,-1.8)               {$Y$};

         \draw[->,above] (X) to node {$\Gamma_{\ulcorner f \urcorner}$} (XY);
         \draw[->,left] (X) to node [xshift=-7pt] {$f$} (Y);
	\draw[->, above] (YX) to node  {$\Gamma_{x}$} (XY);
	\draw[->,right] (YX) to node [xshift=5pt,yshift=0pt] {$ev_x$} (Y);
	\draw[->,right] (XY) to node {$ev_{X,Y}$} (Y);

 \end{tikzpicture}
 \ee
\noindent 
also commutes for every $x \in X$.
Since $f$ and $\Gamma_{\ulcorner f \urcorner}$ are measurable, as are $ev_{x}$ and $\Gamma_{x}$,  it follows by
the elementary result on final $\sigma$-algebras 
   
\begin{lemma} \label{coinduced} Let $Y$ have the  final  $\sigma$-algebra induced by the maps \mbox{$\{f_j : X_j \rightarrow Y \}_{j \in J}$}.   Then a function $g:Y \rightarrow Z$ is measurable if and only if the composition $g \circ f_j$ is measurable for each $j \in J$.
\end{lemma}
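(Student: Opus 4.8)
The plan is to reduce everything to the explicit description of the final $\sigma$-algebra together with the elementary fact that taking preimages commutes with complements and countable unions. First I would record that the final $\sigma$-algebra $\sa_Y$ induced by $\{f_j\}_{j \in J}$ is, by definition, the largest $\sigma$-algebra on $Y$ making every $f_j$ measurable, which concretely means
\be \nonumber
\sa_Y = \{ B \subseteq Y \mid f_j^{-1}(B) \in \sa_{X_j} \text{ for every } j \in J \}.
\ee
That this collection really is a $\sigma$-algebra follows at once from $f_j^{-1}(Y \setminus B) = X_j \setminus f_j^{-1}(B)$ and $f_j^{-1}(\bigcup_n B_n) = \bigcup_n f_j^{-1}(B_n)$, and it is maximal among $\sigma$-algebras making the $f_j$ measurable by the very way it is built.

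The forward direction is then immediate: if $g:Y \to Z$ is measurable, then each $f_j$ is measurable by the definition of $\sa_Y$, so the composite $g \circ f_j : X_j \to Z$ is measurable as a composite of measurable maps.

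For the converse, I would assume $g \circ f_j$ is measurable for every $j$ and take an arbitrary $C \in \sa_Z$. The key is the elementary identity $f_j^{-1}(g^{-1}(C)) = (g \circ f_j)^{-1}(C)$. Since $g \circ f_j$ is measurable, the right-hand side lies in $\sa_{X_j}$, hence $f_j^{-1}(g^{-1}(C)) \in \sa_{X_j}$ for every $j \in J$. By the explicit description of $\sa_Y$ recorded above, this says precisely that $g^{-1}(C) \in \sa_Y$. As $C \in \sa_Z$ was arbitrary, $g$ is measurable.

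There is no genuine obstacle here: once the final $\sigma$-algebra is written in the preimage form, each direction is essentially one line, the only inputs being that measurability is preserved under composition and that preimage commutes with the set operations defining a $\sigma$-algebra. The sole point requiring (trivial) care is to verify at the outset that the displayed collection is indeed a $\sigma$-algebra, so that the phrase ``the final $\sigma$-algebra'' is justified and the converse argument has a well-defined target to land in.
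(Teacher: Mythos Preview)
Your argument is correct and is exactly the standard proof: write the final $\sigma$-algebra explicitly as $\{B\subseteq Y : f_j^{-1}(B)\in\sa_{X_j}\text{ for all }j\}$, get the forward direction from closure of measurability under composition, and get the converse from $(g\circ f_j)^{-1}(C)=f_j^{-1}(g^{-1}(C))$. There is nothing to compare against, since the paper does not actually prove this lemma; it merely records it as ``the elementary result on final $\sigma$-algebras'' and uses it without proof, so your write-up supplies precisely what the paper omits.
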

\noindent
that $ev_{X,Y}$ is measurable because the collection of all constant graph functions generate the $\sigma$-algebra of $X \otimes Y^X$.  
 
More generally, given any measurable function $f:X \otimes Z \rightarrow Y$ there exists a unique measurable map $\tilde{f} : Z  \rightarrow  Y^X$  defined by $\tilde{f}(z) = \ulcorner f(\cdot,z) \urcorner: 1 \rightarrow Y^X$ where $f(\cdot,z): X \rightarrow Y$ sends $x \mapsto f(x,z)$.
This map $\tilde{f}$ is measurable because the $\sigma$-algebra is generated by the \emph{point evaluation} maps $ev_x$ and the diagram

\be  \nonumber
 \begin{tikzpicture}[baseline=(current bounding box.center)]
        \node          (XZ)    at      (3,-1.8)             {$X \otimes Z$};
 	\node	(YX)	at	(0,0)              {$Y^X$};
	\node	(Y)	at	(3,0)               {$Y$};
	\node	(Z)	at	(0,-1.8)	               {$Z$};

         \draw[->,above] (YX) to node {$ev_x$} (Y);
         \draw[->,left] (Z) to node [xshift=-3pt] {$\tilde{f}$} (YX);
	\draw[->, above] (Z) to node [xshift=5pt] {$\Gamma_{x}$} (XZ);
	\draw[->,right] (XZ) to node [xshift=5pt,yshift=0pt] {$f$} (Y);
	\draw[->,right,dashed] (Z) to node {$$} (Y);

 \end{tikzpicture}
\ee
\noindent 
commutes for every $x\in X$ so by the dual of Lemma~\ref{coinduced} it follows that $\tilde{f}$ is measurable. 

Conversely given any measurable map $g  : Z \rightarrow Y^X$ it follows that the composite 
\mbox{$ev_{X,Y} \circ (Id_X \otimes g)$}
is a measurable map. This determines a bijective correspondence 
\be \nonumber
\M(X \otimes Z,Y) \cong \M(X,Y^Z).
\ee

Thus, for every measurable space $X$,  we have the adjunction $X \otimes \_ \dashv -^X$, and  the unit of the adjunction is the graph function $\Gamma_{\_}$ which is the unique map such that the diagram
\be  \nonumber
 \begin{tikzpicture}[baseline=(current bounding box.center)]

	\node	(YXXX)	at	(0,0)	               {$ X \otimes (X \otimes Y)^X$};
	\node	(YX)	at	(5,0)               {$X \otimes Y$};
	\node        (YX2)  at   (0,-1.8)          {$X \otimes Y$};
	
         \node         (Y)    at      (-4,-1.8)             {$Y$};
         \node         (YXX) at      (-4,0)              {$(X \otimes Y)^X$};

	\draw[->, left] (YX2) to node  {$id_X \otimes \Gamma_{\_}$} (YXXX);
	\draw[->,below, right] (YX) to node [xshift=4pt,yshift=-2pt] {$id_{X \otimes Y}$} (YX2);
	\draw[->,above] (YXXX) to node {$ev_{X,Y\otimes X}$} (YX);
	\draw[->,left] (Y) to node {$\Gamma_{\_}$} (YXX);

 \end{tikzpicture}
 \ee
\noindent 
commutes.  The graph function $\Gamma_{\_}$ is, like the unit of any adjoint pair,  defined by $\Gamma_{\_}(f) = \Gamma_f$, which is the constant graph function $\Gamma_f :  X \rightarrow (X \otimes Y)$.

\paragraph{Double dualization into the unit interval $\I$}   Recall  $\I=[0,1]$ with the Borel $\sigma$-algebra on the topology $\tau_{\I}$ generated by the open intervals, $\B_{\I} = \sigma( \tau_{\I})$.  As the function space $\I^X$ has the product \mbox{$\sigma$-algebra} it follows that each of the 
 point evaluation maps $\I^X \stackrel{ev_x}{\longrightarrow} \I$, for every $x \in X$,  is measurable.

We require the following elementary result for subsequent constructions.
\begin{lemma}  \label{measI}
Given any measurable space $X$ the double dual mapping\footnote{In this diagram and those to follow we abuse notation following the doctrine of  expressing the mapping into a function space not as the name of an element, like $\ulcorner ev_x \urcorner \in \I^{\I^X}$ for the given map $\eta_X(x)$,  but rather as the morphism corresponding to the named element.  The dashed arrow notation is employed to make it easier to read given the multiple arrows involved.}
\begin{equation}   \nonumber
 \begin{tikzpicture}[baseline=(current bounding box.center)]
         \node  (PA)  at (0,0)    {$X$};
         \node  (I)     at    (4,0)     {$\I^{I^X}$};

         \node  (PA2)  at (0,-.7)    {$x$};
         \node  (I2)     at    (4,-.7)     {$\I^X \stackrel{ev_x}{\longrightarrow} \I$};
         
	\draw[->,above] (PA) to node {$\eta_{X}$} (I);
	\draw[|->,densely dashed] (PA2) to node {} (I2);
	
\end{tikzpicture}
 \end{equation}
 \noindent
is a measurable function.
\end{lemma}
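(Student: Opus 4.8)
The plan is to apply the dual of Lemma~\ref{coinduced}, which characterizes measurability of maps \emph{into} a space carrying an initial (induced) $\sigma$-algebra. First note that $\eta_X$ is well defined: as recalled just above, each point-evaluation functional $ev_x:\I^X \to \I$ is measurable, hence $ev_x \in \M(\I^X,\I) = \I^{\I^X}$, so $\eta_X(x) = ev_x$ genuinely lands in the stated codomain. Now recall that $\I^{\I^X} = \M(\I^X,\I)$ is endowed with the $\sigma$-algebra generated by the point-evaluation maps
\[
ev_{\ulcorner g \urcorner} : \I^{\I^X} \longrightarrow \I, \qquad H \mapsto H(g),
\]
indexed by the points $\ulcorner g \urcorner \in \I^X$, i.e.\ by the measurable functions $g \in \M(X,\I)$. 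This is precisely the initial $\sigma$-algebra induced by the family $\{ ev_{\ulcorner g \urcorner} \}_{g \in \I^X}$, so by the dual of Lemma~\ref{coinduced} the map $\eta_X$ is measurable if and only if each composite $ev_{\ulcorner g \urcorner} \circ \eta_X : X \to \I$ is measurable.

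First I would compute this composite explicitly. Since $\eta_X(x) = ev_x$, for every $x \in X$ we obtain
\[
\bigl( ev_{\ulcorner g \urcorner} \circ \eta_X \bigr)(x) = ev_{\ulcorner g \urcorner}(ev_x) = ev_x(g) = g(x),
\]
so that the composite $ev_{\ulcorner g \urcorner} \circ \eta_X$ is nothing other than the original function $g$ itself. As $g \in \I^X = \M(X,\I)$ is measurable by definition, each such composite is measurable, and therefore the dual of Lemma~\ref{coinduced} yields the measurability of $\eta_X$, completing the argument.

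I do not expect a genuine obstacle: the substance of the lemma is simply that probing $\eta_X$ against the generators of the target $\sigma$-algebra returns exactly the measurable functions we began with, so there is no real estimate or construction to carry out. The only points demanding care are unwinding the doubly-iterated evaluation notation correctly, and observing that the function space $\I^{\I^X}$ carries an \emph{initial} rather than a final $\sigma$-algebra, so that it is the dual form of Lemma~\ref{coinduced} that applies.
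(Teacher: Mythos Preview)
Your proof is correct and follows essentially the same approach as the paper's: both exploit that the $\sigma$-algebra on $\I^{\I^X}$ is initially generated by the evaluation maps $\{ev_f\}_{f\in \I^X}$, and both reduce to the observation that $ev_f\circ\eta_X=f$. The paper phrases this as the preimage computation $\eta_X^{-1}(ev_f^{-1}(U))=f^{-1}(U)$, while you invoke the dual of Lemma~\ref{coinduced} and compute the composite directly; these are two formulations of the same argument.
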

\proof
Since the functions $\{ev_f\}_{f \in \M(X,\I)}$ generate $\sa_{\I^{\I^X}}$ it suffices to show that \\
\mbox{$\eta^{-1}_{X}(ev_f^{-1}(U)) \in \sa_X$ for $U \in \sa_{\I}$}.  But this set is just $f^{-1}(U)$ which is measurable since $f$ is measurable.
\endproof

\section{The submonad of the double dualization monad}

Using the SMCC category structure we have 
 the double dualization monad $\M(\I^{\bullet},\I)$ on $\M$
specified by
\be  \nonumber
\begin{array}{lcccc}
\M(\I^{\bullet},\I) &:_{ob}&  X  & \mapsto &  \M(\I^X, \I) \\
&:_{ar} & X \stackrel{f}{\longrightarrow} Y & \mapsto &  \M(\I^X, \I)  \stackrel{\M(\I^f,\I)}{\longrightarrow} \M(\I^Y, \I)
\end{array}
\ee
where $\M(\I^f,\I)(G)= G\circ I^f$ is the pushforward of $G$ by $f$,
\begin{figure}[H]
\begin{equation}   \nonumber
 \begin{tikzpicture}[baseline=(current bounding box.center)]
         \node  (AY)  at (0,-2)    {$\I^Y$};
         \node  (AX)  at (0,.0)    {$\I^X$};
          \node  (A)  at  (2,0)      {$\I$};
          
	\draw[->,left] (AY) to node [xshift=-5pt] {$\I^{f}$} (AX);

	\draw[->, right,dashed]  (AY) to node [xshift=5pt]  {$G \circ \I^f : h  \mapsto G(h \circ f) \quad \forall h \in \I^Y$} (A);
	\draw[->, above] (AX) to node {$G$} (A);
	 \end{tikzpicture}
 \end{equation}
 \caption*{Diagram 1.   The pushforward of $G$ by $f$.}
  \label{pushforward}
 \end{figure}

The double dualization monad, similar to any double dualization monad on a SMCC, has the unit $\eta$ and multiplication $\mu$ defined componentwise by
\be \nonumber
 \begin{tikzpicture}[baseline=(current bounding box.center)]
 	\node	(X) 	at	(0,0)              {$X$};
	\node	(P)	at	(3,0)	               {$\M(\I^{X},\I)$};
	\node        (pt)   at     (0,-.6)       {$x$};
	\node        (IX)  at   ( 2.4,-.6)       {$\I^X$};
	\node        (I)    at    ( 3.8,-.6)      {$\I$};

	\draw[->, above] (X) to node  {$\eta_X$} (P);
	\draw[|->,densely dashed] (pt) to node {$$} (IX);
	\draw[->,below] (IX) to node {$ev_x$} (I);
	
	 \node	(PPX) 	at	(7,0)              {$\M(\I^{\M(\I^{X},\I)},\I)$};
	\node	(PX)	at	(12,0)	               {$\M(\I^{X},\I)$};
	\node        (Q)   at     (7,-.6)       {$Q$};
	\node        (IX2)  at   ( 10.4,-.6)       {$\I^X$};
	\node        (I2)    at    ( 13,-.6)      {$\I$};
	
	\node    (f)  at  (10.4,-1.7)  {$f$};
	\node    (Qf)  at  (13,-1.7)  {$Q(ev_f)$};

	\draw[->, above] (PPX) to node  {$\mu_X$} (PX);
	\draw[|->,densely dashed] (Q) to node {$$} (IX2);
	\draw[->,below] (IX2) to node {$\mu_X(Q)$} (I2);
	\draw[|->] (f) to node {} (Qf);

 \end{tikzpicture}
\ee
\noindent
Observe that the multiplication map at each component $X$, $\mu_X$,  can itself be viewed as a pushforward map   because for every \mbox{$Q \in \M(\I^{\M(\I^{X},\I)},\I)$} the diagram 
\begin{figure}[H]
\begin{equation}   \nonumber
 \begin{tikzpicture}[baseline=(current bounding box.center)]
         \node  (AY)  at (0,2)    {$\I^{\I^{\I^X}}$};
         \node  (AX)  at (0,.0)    {$\I^X$};
          \node  (A)  at  (2,2)      {$\I$};
          
	\draw[->,left] (AX) to node [xshift=-5pt] {$\eta_{\I^X}$} (AY);

	\draw[->, above]  (AY) to node [xshift=5pt]  {$Q$} (A);
	\draw[->, below,right] (AX) to node {$\mu_X(Q) = Q \circ \eta_{\I^X}$} (A);
	 \end{tikzpicture}
 \end{equation}
 \caption*{Diagram 1.5.   The multiplication map at component $X$ as a pushforward map.}
  \label{pushforward}
 \end{figure}
\noindent
commutes.

Note that the function space $\I^X$  has a convex structure associated with it, defined pointwise by $(f+_{\alpha}g)(x) = f(x) +_{\alpha} g(x)$ for all $f,g \in \I^X$.

Define the subfunctor $\T \hookrightarrow \M(\I^{\bullet},\I)$ componentwise by 
\be \nonumber
\T(X) = \{  \I^X \stackrel{G}{\longrightarrow} \I \, \, | \, G \textrm{ is a weakly averaging, affine, and preserves limits} \}.
\ee
This is a submonad of  $\M(\I^{\bullet},\I)$ because for any measurable space $X$ all the evaluation maps $ev_x$ are affine and weakly averaging by the pointwise convex structure on $\I^X$, and for every $x \in X$ the evaluation map $ev_x$ preserves limits.  
Similarly, if $Q \in \T(\T(X))$ then $\mu_X(Q) \in \T(X)$ because 
\begin{enumerate}[label=\roman*.]
\item $\mu_X(Q)$ is weakly averaging because for the constant function $\overline{c}:X \rightarrow \I$, using the fact each $G \in \T(X)$ is weakly averaging, it follows the function $ev_{\overline{c}}:\T(X) \rightarrow \I$ is equal to the constant function $\overline{c}: \T(X) \rightarrow \I$.  Therefore $\mu_X(Q)(\overline{c}) = Q(ev_{\overline{c}}) = Q(\overline{c}) = c$.
\item $ev_{f +_{\alpha} g} = ev_f +_{\alpha} ev_g$ which makes $\mu_X(Q)$  affine because $Q$ is affine.
\item   $\mu_X(Q)$  preserve limits because given a sequence of simple measurable functions $\{f_i\}_{i=1}^{\infty} \rightarrow f$  it follows $\{Q(ev_{f_i})\}_{i=1}^{\infty} \rightarrow Q(ev_f)$ because $Q$ preserves limits.
\end{enumerate}

Because  every  map $h \in \I^X$ can be written as the limit of a sequence of simple measurable functions it follows  the pushforward map of $G \in \T(X)$ along $f: X \rightarrow Y$, $\T(f)(G)$,  also preserve limits because 
\be  \nonumber
\begin{array}{lcl}
\T(f)(G)[g] &=& \sup_{\psi \in \I^Y} \{ G(\psi \circ f) \, | \, \psi \textrm{ simple}, \psi \le g\} \\
&=& \sup_{\phi \in \I^X} \{G(\phi) \, | \, \phi \textrm{ simple},   \phi \le (g \circ f)\} \\
&=& G(g \circ f)
\end{array}.
\ee

\section{Probability measures as weakly averaging affine functionals which preserve limits} 
Throughout this section, as well as subsequent sections, let $X$ denote a measurable space.  For any subset $S$ of $X$ we denote its complement by $S^c$.

\begin{lemma}\label{wellDefined}  Every simple measurable function $f:X \rightarrow \I$ can be written as a convex sum, $f = \sum_{i=1}^n a_i \chi_{S_i}$ with $\sum_{i=1}^n a_i =1$.
\end{lemma}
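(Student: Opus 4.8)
The plan is to construct the required representation by horizontal (layer-cake) slicing of $f$, using the empty set to absorb whatever slack is needed so the coefficients total exactly $1$.

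First I would exploit the hypothesis that $f$ is simple: its image is a finite subset of $\I$, so I can list the distinct \emph{positive} values attained by $f$ as $v_1 < v_2 < \cdots < v_n$ and adjoin an anchor $v_0 := 0$ (regardless of whether $0$ is itself attained; the degenerate case $f \equiv 0$, where the list of positive values is empty, is already handled by $f = \chi_{\emptyset}$). For each $j \in \{1,\dots,n\}$ I set $B_j := f^{-1}([v_j,1])$; these superlevel sets are measurable because $f$ is measurable and $[v_j,1]$ is Borel. Evaluating at a point $x$ with $f(x) = v_m$ gives $\chi_{B_j}(x) = 1$ exactly when $j \le m$, so that $\sum_{j=1}^n (v_j - v_{j-1})\,\chi_{B_j}(x) = \sum_{j=1}^m (v_j - v_{j-1}) = v_m - v_0 = f(x)$, while at a point where $f(x) = 0$ every $\chi_{B_j}(x)$ vanishes and both sides are $0$. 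Hence $f = \sum_{j=1}^n (v_j - v_{j-1})\,\chi_{B_j}$ as functions on $X$.

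The coefficients $a_j := v_j - v_{j-1}$ produced this way are nonnegative and sum to $v_n - v_0 = v_n \le 1$, which need not equal $1$. To fix the normalization I would adjoin one further term $a_0 := 1 - v_n \ge 0$ attached to $S_0 := \emptyset$; since $\chi_{\emptyset}$ is the zero function this leaves the left-hand side unchanged, while now $\sum_{j=0}^n a_j = v_n + (1 - v_n) = 1$. Relabelling the indices then gives $f = \sum_{i=1}^N a_i \chi_{S_i}$ with every $a_i \ge 0$ and $\sum_i a_i = 1$, which is the asserted convex-sum representation.

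I do not expect a serious obstacle: measurability of the slices is immediate and the pointwise identity is the standard telescoping (layer-cake) computation. The only point that genuinely needs care is the normalization, since the natural horizontal coefficients add up to the maximum value $v_n$ rather than to $1$; this is precisely what the slack term $a_0\,\chi_{\emptyset}$ repairs. (Equivalently one could induct on the number of distinct values of $f$, with the base case a constant $\overline{c} = c\,\chi_X + (1-c)\,\chi_{\emptyset}$, but the direct slicing argument is cleaner and avoids the bookkeeping of the inductive step.)
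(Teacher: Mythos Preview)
Your proof is correct and is essentially the same argument the paper gives: the paper also rewrites $f$ via the telescoping (layer-cake) decomposition over the nested superlevel sets $\cup_{i\ge j}S_i$ with coefficients $a_j-a_{j-1}$, and then appends the slack term $(1-a_n)\chi_{\emptyset}$ to make the coefficients sum to $1$. Your presentation is slightly tidier in that you work directly with the distinct positive values and their superlevel sets $f^{-1}([v_j,1])$ rather than first passing through a disjoint-set representation, but the content is identical.
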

\begin{proof}  We  can assume the simple  measurable function  \mbox{$f = \sum_{i=1}^n a_i \chi_{S_i}$} is written with 
 pairwise disjoint measurable sets $\{S_i \}_{i=1}^n$  and has increasing coefficients, $a_1 \le a_2, \ldots \le  a_n$.  This sum can be rewritten as the ``telescoping'' function
\be  \nonumber
f = a_1 \chi_{\cup_{i=1}^n S_i} +(a_2 - a_1) \chi_{\cup_{i=2}^n S_i}  + \ldots  + (a_j - a_{j-1}) \chi_{\cup_{i=j}^n S_i} + \ldots  + (a_n - a_{n-1}) \chi_{S_n} + (1-a_n) \chi_{\emptyset}
\ee
which satisfies the condition that the sum of the coefficients is one, and each coefficient of this expression is easily seen to lie in the interval $\I$.
\end{proof}

As every measurable function $f: X \rightarrow \I$ can be written as the  limit of a sequence of simple measurable functions we obtain

\begin{cor} \label{convex} Every measurable function $f:X \rightarrow \I$ can be written as the limit of a sequence of simple measurable functions $\{f_i\}_{i=1}^{\infty}$ with each $f_i$ a convex sum. 
\end{cor}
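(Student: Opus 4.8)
The plan is to combine the standard monotone approximation of measurable functions by simple functions with the convex-sum representation already established in Lemma~\ref{wellDefined}. The corollary asserts nothing about the \emph{function} $f$ beyond what the approximation theorem already supplies; the only additional content is that each simple function occurring in an approximating sequence can be rewritten — without changing any of its values — as a genuine convex combination of characteristic functions. So the argument should be short, amounting to applying Lemma~\ref{wellDefined} term by term.

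First I would exhibit the explicit dyadic approximation. For a measurable $f:X \rightarrow \I$ set, for each $n \ge 1$,
\[
g_n = \sum_{k=0}^{2^n - 1} \frac{k}{2^n}\, \chi_{f^{-1}[k/2^n,\,(k+1)/2^n)} + \chi_{f^{-1}\{1\}}.
\]
Each preimage is measurable because $f$ is measurable, so every $g_n$ is a simple measurable function; moreover the sequence $\{g_n\}$ is nondecreasing and converges pointwise to $f$ (indeed uniformly, since $0 \le f - g_n \le 2^{-n}$ on all of $X$). This is exactly the ``limit of simple functions'' alluded to in the lead-in to the corollary, and I would either cite it as the standard result or record this one-line construction.

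Next, for each fixed $n$ I would apply Lemma~\ref{wellDefined} to the simple function $g_n$, obtaining a representation $g_n = f_n := \sum_i a_i^{(n)} \chi_{S_i^{(n)}}$ in which the coefficients are nonnegative and satisfy $\sum_i a_i^{(n)} = 1$, that is, $f_n$ is a convex sum. The essential observation is that this rewriting is an \emph{equality of functions}: the telescoping device in the proof of Lemma~\ref{wellDefined}, together with the harmless padding term $(1-a_n)\chi_{\emptyset}$, alters only the \emph{presentation} of $g_n$, never the value $g_n(x)$ at any point. Hence $f_n = g_n$ for every $n$, and $\{f_i\}_{i=1}^{\infty}$ is a sequence of simple measurable functions, each a convex sum, with $f_i \rightarrow f$ pointwise.

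I expect no genuine analytic obstacle here: the convergence is inherited verbatim from the standard approximation, and Lemma~\ref{wellDefined} leaves the functions themselves unchanged, so the limit is undisturbed. The only point deserving a moment's care is to confirm that the convex-sum rewriting can be applied to \emph{each} member of the sequence (it can, since it is carried out function by function), so that replacing every $g_n$ by its convex-sum form $f_n$ does not affect passage to the limit.
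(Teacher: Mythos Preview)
Your proposal is correct and follows exactly the route the paper takes: the paper's entire proof is the single clause preceding the corollary, namely that every measurable $f:X\to\I$ is a limit of simple functions, with Lemma~\ref{wellDefined} applied term by term to recast each simple function as a convex sum. Your version simply makes both steps explicit (the dyadic approximation and the observation that the rewriting preserves the function pointwise), so there is nothing to correct or compare.
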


   From this perspective, the motivation for the definition of  an affine  functional \mbox{$G: \I^X \rightarrow \I$} preserving limits is evident because, for  each simple function $f_i = \sum_{j=1}^{N_i} \alpha_{i,j} \chi_{A_{i,j}}$ with $\sum_{j=1}^{N_i} \alpha_{i,j}=1$, it follows  $G$ satisfies $G(f_i) = \sum_{j=1}^{N_i} \alpha_{i,j} G(\chi_{A_{i,j}})$.  
 
\begin{lemma} \label{basic}
For $G \in \T(X)$ and $\chi_S, \chi_T: X \rightarrow \I$  the characteristic functions associated with $S, T \in \sa_X$ it follows 
\begin{enumerate}
\item[(i)] $G(\chi_X) = 1$  and  $G(\chi_{\emptyset}) = 0$
\item[(ii)] $G(\chi_{S^c}) = 1 - G(\chi_{S})$
\item[(iii)] $G(\chi_{S \cap T}) + G(\chi_{S \cup T}) = G(\chi_{S}) + G(\chi_{T})$
\item[(iv)] If $S \subseteq T$  then $G(\chi_S) \le G(\chi_T)$
\item[(v)] If $\{S_i\}_{i=1}^{\infty}$ is a disjoint cover of $S$ by measurable sets then 
\be \nonumber
G(\chi_S) = \displaystyle{ \lim_{N \rightarrow \infty} \{ \sum_{i=1}^N G(\chi_{S_i}) \}}
\ee
\item[(vi)]  For any $\alpha \in \I$ and $f \in \I^X$, $G(\alpha f) = \alpha G(f)$.
\end{enumerate}
\end{lemma}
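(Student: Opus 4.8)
The plan is to derive all six items from the three defining properties of $G \in \T(X)$ — weakly averaging, affine, and preservation of limits — treating (i)--(iii) and (vi) as purely algebraic consequences of affineness together with the constant-function normalization, (iv) as monotonicity, and (v) as the one genuinely analytic statement where preservation of limits is indispensable. I would prove them in the order (i), (vi), (ii), (iii), (iv), (v), since (iii) yields finite additivity which feeds (v), and (i) supplies the normalizations $G(\overline{0})=0$ and $G(\overline{1})=1$ used throughout.

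For the normalizations, note that $\chi_X=\overline{1}$ and $\chi_\emptyset=\overline{0}$ are constant functions, so weak averaging gives (i) at once. For (vi), write $\alpha f = f +_\alpha \overline{0}$; affineness gives $G(\alpha f)=G(f)+_\alpha G(\overline{0})=\alpha G(f)+(1-\alpha)\cdot 0=\alpha G(f)$ using $G(\overline{0})=0$. For (ii) the pointwise identity $\chi_S +_{1/2}\chi_{S^c}=\overline{1/2}$ combined with affineness and $G(\overline{1/2})=1/2$ yields $\tfrac12 G(\chi_S)+\tfrac12 G(\chi_{S^c})=\tfrac12$, i.e. $G(\chi_{S^c})=1-G(\chi_S)$. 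For (iii) the key observation is the inclusion--exclusion identity of functions $\chi_{S\cap T}+\chi_{S\cup T}=\chi_S+\chi_T$, which rewritten as $\chi_{S\cap T} +_{1/2}\chi_{S\cup T}=\chi_S +_{1/2}\chi_T$ lets me apply $G$ and expand both sides by affineness; clearing the factor $\tfrac12$ gives (iii). Each of these is a one-line consequence once the correct convex-combination identity is isolated.

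For (iv) I would record the general monotonicity of $G$ that falls directly out of preserving limits: if $f\le g$ pointwise then every simple $\psi\le f$ also satisfies $\psi\le g$, so the defining supremum for $G(f)$ is taken over a subfamily of that for $G(g)$, whence $G(f)\le G(g)$; specializing to $\chi_S\le\chi_T$ gives (iv). Alternatively, writing $T$ as the disjoint union of $S$ and $T\setminus S$, (iii) and (i) give $G(\chi_T)=G(\chi_S)+G(\chi_{T\setminus S})\ge G(\chi_S)$ since $G$ is $\I$-valued.

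The substance of the lemma is (v), and this is the step I expect to be the main obstacle. From (iii) I first obtain finite additivity by induction: for pairwise disjoint measurable sets, $G(\chi_{S_1\sqcup\cdots\sqcup S_N})=\sum_{i=1}^N G(\chi_{S_i})$, using $G(\chi_\emptyset)=0$ at each step. Writing $T_N=\bigcup_{i=1}^N S_i$, the characteristic functions $\chi_{T_N}$ then form an increasing sequence of simple functions whose pointwise supremum is $\chi_S$. The final and essential move is to pass from finite additivity to the countable limit, and this is exactly what preservation of limits is designed to supply: applied to the canonical increasing sequence $\chi_{T_N}\nearrow\chi_S$ it gives $G(\chi_S)=\lim_{N\to\infty}G(\chi_{T_N})=\lim_{N\to\infty}\sum_{i=1}^N G(\chi_{S_i})$. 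I would emphasize that this limit passage is where the real content lies: finite additivity together with monotonicity only yields the inequalities $\sum_{i>N}G(\chi_{S_i})\le G(\chi_{S\setminus T_N})$, and it is precisely the preservation-of-limits hypothesis that forces the tail $G(\chi_{S\setminus T_N})$ to vanish, thereby distinguishing a genuine countably additive probability measure from a merely finitely additive charge. Making this passage rigorous — that is, invoking preservation of limits for the specific sequence $\chi_{T_N}$ rather than merely as an unattained supremum over all simple functions below $\chi_S$ — is the one delicate point in the argument.
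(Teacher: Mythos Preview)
Your proposal is correct and matches the paper's proof almost line for line: the same convex identities $\chi_S +_{1/2}\chi_{S^c}=\overline{1/2}$, $\chi_{S\cap T}+_{1/2}\chi_{S\cup T}=\chi_S+_{1/2}\chi_T$, and $\alpha f=f+_\alpha\chi_\emptyset$ are used for (ii), (iii), (vi), and the same passage from finite additivity to the increasing sequence $\chi_{T_N}\nearrow\chi_S$ via preservation of limits handles (v). The only minor divergence is your first argument for (iv), deducing general monotonicity directly from the sup-over-simple-functions definition of preserving limits; the paper instead uses the disjoint decomposition $T=S\cup(T\cap S^c)$, which is your second alternative.
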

\begin{proof} (i) Since $\chi_X$ and $\chi_{\emptyset}$ are constant functions the result follows from the weakly averaging condition. 
(ii)  Consider the constant function
\be \nonumber
\overline{\frac{1}{2}} = \frac{1}{2} \chi_S + \frac{1}{2} \chi_{S^c}: X \rightarrow \I.  
\ee
Since  $G \in \T(X)$ it follows that
\be \nonumber
 \frac{1}{2} = G(\overline{\frac{1}{2}}) = G(\frac{1}{2} \chi_S + \frac{1}{2} \chi_{S^c})=\frac{1}{2} (G( \chi_S) + G(\chi_{S^c}))
\ee
which implies $G( \chi_S) + G(\chi_{S^c})=1$ and hence the  result.
(iii) This is a consequence of the observation that for all $S,T \in \sa_X$  the equation  
\be \nonumber
\frac{1}{2}  \chi_{S \cup T} + \frac{1}{2}  \chi_{S \cap T} =\frac{1}{2}  \chi_S + \frac{1}{2}  \chi_T
\ee
holds and both the left and right terms are measurable functions $X \rightarrow \I$.  
Applying $G$ to both sides of this expression gives the result.  
(iv) Apply the weakly averaging affine morphism $G$ to both sides of the  equation 
\be \nonumber
\frac{1}{2} \chi_T + \frac{1}{2} \chi_{\emptyset} = \frac{1}{2} \chi_S + \frac{1}{2} \chi_{T \cap S^c}
\ee
and use the condition $G(\chi_{T \cap S^c}) \ge 0$.
(v) By part (iii) $G(\chi_{S_1 \cup S_2}) = G(\chi_{S_1}) + G(\chi_{S_2})$ since the $S_i$ are disjoint.  Iterating this gives
the  monotone increasing sequence $\{ G(\chi_{ \cup_{i=1}^N S_i})  \}_{i=1}^{\infty}$ which is bounded above by $G(\chi_S)$.
Because  $G$ preserves limits, and each $\chi_{\cup_{i=1}^N S_i}$ is a simple function with $\{\chi_{\cup_{i=1}^N S_i} \}_{N=1}^{\infty} \nearrow \chi_S$, the result follows.
(vi) For $\alpha \in \I$ and  $f \in \I^X$ we observe that $\alpha f  = \alpha f + (1-\alpha) \chi_{\emptyset}$,  and by using the affine property of $G$ and  result (i), to obtain
\be  \nonumber
G(\alpha f) = G(\alpha f + (1-\alpha) \chi_{\emptyset}) = \alpha G(f) + (1-\alpha) G(\chi_{\emptyset}) = \alpha G(f).
\ee
\end{proof}

To prove the result that the two monads $\G$ and $\T$ are naturally isomorphic it is first necessary to prove a basic result regarding the Giry monad.

\begin{lemma}  There is a natural transformation $\tau$ of the two bifunctors

\be \nonumber
 \begin{tikzpicture}[baseline=(current bounding box.center)]
 	\node	(MM) 	at	(0,0)              {${\M}^{op} \times \M$};
	\node	(M) 	at	(6,0)	   {$\M$};

	\draw[->,above] ([yshift=2pt] MM.east) to node [yshift=0pt]{$\G(\_) \otimes \_$} ([yshift=2pt] M.west);
	\draw[->,below] ([yshift=-2pt] MM.east) to node {$\G(\_ \otimes \_)$} ([yshift=-2pt] M.west);
	
 \end{tikzpicture}
\ee
\noindent
defined component wise  for all $y \in Y$ and $P \in \G(X)$ by \mbox{$\tau_{P,y} = P \Gamma_{y}^{-1}$}, where
 \mbox{$\Gamma_y: X \rightarrow X \otimes Y$}  is the constant graph function.
\end{lemma}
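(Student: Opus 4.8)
The plan is to read the statement as three assertions bundled together: that each $\tau_{P,y}$ is a point of $\G(X\otimes Y)$, that the assembled map $\tau_{X,Y}\colon\G(X)\otimes Y\to\G(X\otimes Y)$, $(P,y)\mapsto P\Gamma_y^{-1}$, is a morphism of $\M$, and that these maps are natural in $(X,Y)$. The first is immediate once one records the reformulation $\tau_{P,y}=\G(\Gamma_y)(P)$: the constant graph $\Gamma_y\colon X\to X\otimes Y$ is measurable by the definition of the tensor $\sigma$-algebra, so $\tau_{P,y}$ is the pushforward of a probability measure along a measurable map, hence a probability measure on $X\otimes Y$ of total mass $P(X)=1$. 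I would carry the identity $\tau_{P,y}=\G(\Gamma_y)(P)$ through the whole argument.

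For naturality I would reduce everything to a single set-level identity between graph functions. Writing a generic morphism of the index category as a pair $(f,g)$, with $f$ acting on the first factor and $g\colon Y\to Y'$ on the second, the two bifunctors send it to $\G(f)\otimes g$ and to $\G(f\otimes g)$, and on underlying sets one checks directly that
\[
(f\otimes g)\circ\Gamma_y^{X,Y}=\Gamma_{g(y)}^{X',Y'}\circ f,
\]
both sides sending $x\mapsto(f(x),g(y))$. Applying $\G$ and using that it sends composites to composites of pushforwards, both routes around the naturality square carry $(P,y)$ to the pushforward of $P$ along this common map; the square therefore commutes. Nothing beyond functoriality of $\G$ enters here.

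The substance of the proof is measurability of the component $\tau_{X,Y}$. Since $\G(X\otimes Y)$ carries the initial $\sigma$-algebra generated by the evaluations $Q\mapsto Q(B)$ for $B\in\sa_{X\otimes Y}$, while $\G(X)\otimes Y$ carries the final $\sigma$-algebra of its graph functions, Lemma~\ref{coinduced} and its dual reduce the claim to showing that $(P,y)\mapsto P(\Gamma_y^{-1}(B))$ is measurable for every $B$. I would attack this by a Dynkin argument: the family of such $B$ is a $\lambda$-system --- closure under complements is the identity $P(\Gamma_y^{-1}(B^c))=1-P(\Gamma_y^{-1}(B))$, and closure under countable disjoint unions follows from countable additivity, which writes the value as a pointwise-convergent series of measurable functions --- and it contains the measurable rectangles $S\times T$, on which $P(\Gamma_y^{-1}(S\times T))=P(S)\,\chi_T(y)$ is manifestly measurable.

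The obstacle I expect to dominate is that rectangles generate only the product $\sigma$-algebra, whereas $\sa_{X\otimes Y}$ is the \emph{final} $\sigma$-algebra and is in general strictly finer; the Dynkin argument as seeded therefore settles measurability only for product-measurable $B$. To reach all of $\sa_{X\otimes Y}$ I would invoke the other half of its universal property --- that $\Gamma_g^{-1}(B)\in\sa_Y$ for every measurable $g\colon Y\to X$ --- and run a functional monotone-class argument whose multiplicative seed is the class of measurable maps $X\otimes Y\to\I$ detected by the graph functions, reducing a general measurable integrand to simple functions via Corollary~\ref{convex} and thence to the characteristic functions already controlled by the $\lambda$-system. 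Closing this product-versus-final gap is the one genuinely delicate point; the remainder is bookkeeping with pushforwards inside the SMCC structure of $\M$ established above.
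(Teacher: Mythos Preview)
Your treatment of naturality is essentially the paper's: the paper factors the naturality square through the two identities $(id_X\otimes g)\circ\Gamma_y=\Gamma_{g(y)}$ and $(f\otimes id_{Y'})\circ\Gamma_{y'}=\Gamma_{y'}\circ f$, which together give exactly your $(f\otimes g)\circ\Gamma_y=\Gamma_{g(y)}\circ f$, and then applies functoriality of $\G$. No disagreement there.

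The measurability argument, however, has a real gap, and it is precisely the one you flag. A Dynkin or monotone--class argument needs a generating $\pi$-system for $\sa_{X\otimes Y}$, but the tensor $\sigma$-algebra is defined as a \emph{final} $\sigma$-algebra: $B\in\sa_{X\otimes Y}$ is characterised by the condition that $\Gamma_h^{-1}(B)$ is measurable for every graph map $\Gamma_h$, not by membership in a $\sigma$-algebra generated from some explicit family. There is no canonical $\pi$-system sitting inside it on which to seed the argument, and your proposed ``multiplicative seed \ldots\ detected by the graph functions'' does not name one. The monotone--class machinery is simply the wrong tool for a $\sigma$-algebra presented from above rather than from below.

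The paper avoids this entirely by exploiting the final $\sigma$-algebra on the \emph{domain} $\G(X)\otimes Y$ rather than trying to analyse $\sa_{X\otimes Y}$. You already invoke Lemma~\ref{coinduced} and its dual; the move you are missing is to use the lemma itself, not its dual, a second time. Having reduced to checking that $ev_\zeta\circ\tau_{X,Y}\colon\G(X)\otimes Y\to\I$ is measurable for each fixed $\zeta\in\sa_{X\otimes Y}$, the paper composes with the constant graph maps into $\G(X)\otimes Y$. For $\overline{\Gamma}_z\colon\G(X)\to\G(X)\otimes Y$ with $z\in Y$ fixed, the composite is $P\mapsto P(\Gamma_z^{-1}(\zeta))=ev_{\Gamma_z^{-1}(\zeta)}(P)$, measurable by the very definition of the $\sigma$-algebra on $\G(X)$; here $\zeta$ enters only through the single measurable set $\Gamma_z^{-1}(\zeta)\in\sa_X$, so no structural decomposition of $\zeta$ is required. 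The second family of graph maps $\Gamma_Q\colon Y\to\G(X)\otimes Y$ is handled by a separate computation. The point is that Lemma~\ref{coinduced} converts ``measurable out of a final $\sigma$-algebra'' into a family of conditions each of which involves only an ordinary measurable space, and this is what makes the argument go through without ever needing to exhibit generators for $\sa_{X\otimes Y}$.
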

\begin{proof}
Let $f: X\rightarrow X'$ and $g:Y \rightarrow Y'$.    The naturally condition follows easily using the diagram
\be \nonumber
 \begin{tikzpicture}[baseline=(current bounding box.center)]
 	\node	(GXY) 	at	(0,0)              {$\G(X) \otimes Y$};
	\node	(GXY2) 	at	(4,0)	   {$\G(X \otimes Y)$};
	\node         (GXYp)     at   (0,-2)     {$\G(X) \otimes Y'$};
	\node      (GXYp2)    at   (4,-2)   {$\G(X \otimes Y')$};
	\node       (GXpYp)   at   (0,-4)   {$\G(X') \otimes Y'$};
	\node       (GXpYp2)  at   (4,-4)  {$\G(X' \otimes Y')$};

	\draw[->, above] (GXY) to node  {$\tau_{X,Y}$} (GXY2);
	\draw[->, above] (GXYp) to node  [yshift=0pt]{$\tau_{X,Y'}$} (GXYp2);
	\draw[->,above] (GXpYp) to node {$\tau_{X',Y'}$} (GXpYp2);
	\draw[->,left] (GXY) to node {$id_{\G(X)} \otimes g$} (GXYp);
	\draw[->,right] (GXY2) to node {$\G(id_X \otimes g)$} (GXYp2);
	\draw[->,left] (GXYp) to node {$\G(f) \otimes id_Y$} (GXpYp);
	\draw[->,right] (GXYp2) to node {$\G(f \otimes id_{Y'})$} (GXpYp2);

 \end{tikzpicture}
\ee
and the relations $(id_X \otimes g) \circ \Gamma_y = \Gamma_{g(y)}$ and $(f \otimes id_{Y'}) \circ \Gamma_{y'} = \Gamma_{y'} \circ f$.

To prove that $\tau_{X,Y}$ is measurable consider the following two diagrams
\be \nonumber
 \begin{tikzpicture}[baseline=(current bounding box.center)]
	\node	(XGY) 	at	(3,0)	   {$\G(X) \otimes Y$};
	\node        (GXY)    at   (7,0)           {$\G(X \otimes Y)$};
	\node        (I)           at   (10,0)           {$\I$};

	\draw[->,above] (XGY) to node {$\tau_{X,Y}$} (GXY);
	\draw[->,above] (GXY) to node {$ev_{\zeta}$} (I);
	
	\node   (Y)   at   (12,0)    {$X$};
	\node   (XY)  at   (14,0)   {$X \otimes Y$};
	\draw[->,above] (Y) to node {$\Gamma_y$} (XY);
	
 \end{tikzpicture}
\ee
and let $W \in \sa_{\I}$, and $\zeta \in \sa_{X \otimes Y}$.  Taking the preimage of $W$ under the evaluation map $ev_{\zeta}$ gives a measurable set in $\G(X \otimes Y)$.  Recall that sets of this form generate the smallest $\sigma$-algebra on $\G(X \otimes Y)$ such that the evaluation maps are measurable.  The preimage of this set under $\tau_{X,Y}$ then yields the set
\be \nonumber
\tau_{X,Y}^{-1}(ev_{\zeta}^{-1}(W)) = \{ (P, y) \in \G(X) \otimes Y \, | \, P \left(\Gamma_y^{-1}(\zeta)\right) \in W \}. 
\ee
Note that both the  projections maps $\pi_Y : \G(X) \otimes Y \rightarrow Y$ and $\pi_{\G(X)}: \G(X) \otimes Y \rightarrow \G(X)$ are measurable under the given $\sigma$-algebra on $\G(X) \otimes Y$ because $\sa_{\G(X) \otimes Y}$ contains the product $\sigma$-algebra.  If $y \not \in \pi_Y(\zeta)$ then $\Gamma_{y}^{-1}(\zeta) = \emptyset$ and $P(\emptyset)=0$.  Hence if $0 \not \in W$ then $\tau(P,y) \not \in W$ for any $P \in \G(X)$ so the inverse image is the empty set which is measurable.  Consideration of the case $y \in \pi_Y(\zeta)$ gives
\be \nonumber
\tau_{X,Y}^{-1}(ev_{\zeta}^{-1}(W)) = \displaystyle{ \cup_{y \in \pi_Y(\zeta)}}  ev^{-1}_{\Gamma_y^{-1}(\zeta)}(W) \times  \{ y \}
\ee
where the map
\be \nonumber
 \begin{tikzpicture}[baseline=(current bounding box.center)]
	\node	(GY) 	at	(0,0)	   {$ \G(X)$};
	\node        (I)    at   (4,0)           {$\I$};

	\draw[->,above] (GY) to node {$ev_{\Gamma_{y}^{-1}(\zeta)}$} (I);
	
 \end{tikzpicture}
\ee
yields the measurable sets $ev_{\Gamma_{y}^{-1}(\zeta)}^{-1}(W)$ in $\G(X)$ for all $y \in Y$.  To prove that the set \mbox{$\cup_{y \in \pi_Y(\zeta)} ev^{-1}_{\Gamma_y^{-1}(\zeta)}(W)  \times \{ y \}$} is measurable in $\G(X) \otimes Y$ it suffices to show that the preimage of this set under the constant graph functions
\be \nonumber
 \begin{tikzpicture}[baseline=(current bounding box.center)]
	\node	(GY) 	at	(0,0)	   {$ \G(X)$};
	\node        (XGY)    at   (3,0)           {$\G(X) \otimes Y$};
	\draw[->,above] (GY) to node {$\overline{\Gamma}_{z}$} (XGY);
	
		\node	(X) 	at	(6,0)	   {$Y$};
	\node        (XGY2)    at   (9,0)           {$\G(X) \otimes Y$};
	\draw[->,above] (X) to node {$\Gamma_Q$} (XGY2);

 \end{tikzpicture}
\ee
is measurable, for all $z \in Y$ and all $Q \in \G(X)$.\footnote{We use an overbear notation on the constant graph function $\G(X) \rightarrow \G(X) \otimes Y$ to distinguish it from the constant graph function $X \rightarrow X \otimes Y$ defined previously in the proof.} 

Fix $z \in Y$.   If $z \in \pi_Y(\zeta)$ then
\be \nonumber
\overline{\Gamma}_{z}^{-1}(\cup_{y \in \pi_Y(\zeta)}  ev^{-1}_{\Gamma_y^{-1}(\zeta)}(W) \times \{y\})  = ev^{-1}_{\Gamma_z^{-1}(\zeta)}(W)
\ee
which is measurable by the construction of the $\sigma$-algebra on $\G(X)$, and  if $z \not \in \pi_X^{-1}(\zeta)$ one obtains the empty set which is measurable.   

Similarly, fix $Q \in \G(X)$.  Then
\be \nonumber
\begin{array}{lcl}
\Gamma_Q^{-1}( \cup_{y \in \pi_Y(\zeta)} ev^{-1}_{\Gamma_y^{-1}(\zeta)}(W)) \times \{ y \}) &=& \{y \in \pi_Y(\zeta) \, | \, Q \in ev_{\Gamma_y^{-1}(\zeta)}^{-1}(W) \} \\
&=& \Gamma_Q^{-1}( \cup_{y \in \pi_Y(\zeta)}  \pi_{\G(X)}^{-1}(ev_{\Gamma_y^{-1}(\zeta)}^{-1}(W)) \times Y) \cap \zeta) \\
&=& \Gamma_Q^{-1}(  \cup_{y \in \pi_Y(\zeta)} \pi_{\G(X)}^{-1}(ev_{\Gamma_y^{-1}(\zeta)}^{-1}(W)) \times Y) \,  \cap \, \Gamma_Q^{-1}(\zeta) 
\end{array}
\ee
which is measurable because the first term is either $Y$ or $\emptyset$, while the second term is measurable because $\Gamma_Q$ is measurable and $\zeta$ is a measurable set.
The  above equation is motivated by consideration of the diagram
\be \nonumber
 \begin{tikzpicture}[baseline=(current bounding box.center)]
	\node	(I) 	at	(0,0)	   {$ \I$};
	\node        (GY)    at   (3,0)           {$\G(X)$};
	\node        (XGY)  at   (6,0)     {$\G(X) \otimes Y$};
	\node        (X)       at    (9,0)     {$Y$};
	\node         (X2)    at     (6,-1.75)   {$Y$};
	
	\draw[->,above] (GY) to node {$ev_{\Gamma_y^{-1}(\zeta)}$} (I);
	\draw[->,above] (XGY) to node {$\pi_{\G(X)}$} (GY);
	\draw[->,above] (XGY) to node {$\pi_Y$} (X);
	\draw[->,right] (X2) to node {$\Gamma_Q$} (XGY);

 \end{tikzpicture}
\ee
\end{proof}

\begin{lemma} \label{two} For $2=\{0,1\}$ with the discrete $\sigma$-algebra it follows  $\G(2) = \I$.
\end{lemma}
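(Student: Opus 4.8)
The plan is to exhibit the evaluation-at-$\{1\}$ map as an explicit isomorphism and to verify measurability in both directions using the universal property of the induced $\sigma$-algebra on $\G(2)$. Concretely, I would define
\[
\phi : \G(2) \longrightarrow \I, \qquad \phi(P) = P(\{1\}) = ev_{\{1\}}(P).
\]
Since $2$ carries the discrete $\sigma$-algebra $\sa_2 = \{\emptyset,\{0\},\{1\},\{0,1\}\}$, any probability measure $P$ on $2$ satisfies $P(\emptyset)=0$, $P(\{0,1\})=1$, and $P(\{0\}) = 1 - P(\{1\})$, so $P$ is completely determined by the single number $P(\{1\}) \in \I$; conversely every $t \in \I$ determines such a measure. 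Hence $\phi$ is a set-theoretic bijection whose inverse $\phi^{-1}$ sends $t$ to the measure $P_t$ with $P_t(\{1\}) = t$.

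Next I would check that $\phi$ is measurable, which is immediate: by definition the $\sigma$-algebra $\sa_{\G(2)}$ is the smallest one making every evaluation map $ev_S:\G(2)\to\I$ (for $S\in\sa_2$) measurable, and $\phi = ev_{\{1\}}$ is one of these. For the inverse I would invoke the dual of Lemma~\ref{coinduced}: $\phi^{-1}$ is measurable iff $ev_S \circ \phi^{-1}$ is measurable for each $S \in \sa_2$. Evaluating on $P_t$ gives $ev_{\{1\}}\circ\phi^{-1} = id_\I$ and $ev_{\{0\}}\circ\phi^{-1} = 1 - id_\I$, both measurable, while $ev_\emptyset\circ\phi^{-1}$ and $ev_{\{0,1\}}\circ\phi^{-1}$ are the constants $0$ and $1$. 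Thus $\phi^{-1}$ is measurable and $\phi$ is an isomorphism in $\M$.

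To record that the identification respects the convex structure as well, one only observes that mixtures in $\G(2)$ are computed setwise, $(\alpha P + (1-\alpha)Q)(\{1\}) = \alpha P(\{1\}) + (1-\alpha)Q(\{1\})$, so $\phi$ is affine for the convex structure $u +_\alpha v = \alpha u + (1-\alpha)v$ on $\I$; hence $\G(2) \cong \I$ as convex spaces too, matching the way the result is used elsewhere in the paper.

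The only genuine point to get right — and the step I would treat most carefully — is the identification of the generating family of $\sa_{\G(2)}$ together with the verification that $\phi$ carries it exactly onto $\B_\I$; everything else is bookkeeping. In particular one must note that $ev_\emptyset$ and $ev_{\{0,1\}}$ are constant and contribute nothing, so that $\sa_{\G(2)}$ is in fact generated by the single map $ev_{\{1\}}$, which is precisely what upgrades $\phi$ from a measurable bijection to a genuine isomorphism of measurable spaces.
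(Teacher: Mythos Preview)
Your argument is correct and follows essentially the same route as the paper: identify $\G(2)$ with $\I$ via $ev_{\{1\}}$ and observe that the $\sigma$-algebra on $\G(2)$ is generated precisely by this map (the other evaluations being either constant or $1-ev_{\{1\}}$), so that the bijection is an isomorphism in $\M$. You are in fact more careful than the paper, which leaves the measurability of the inverse implicit; your explicit check via the initial-$\sigma$-algebra universal property and your remark on the affine structure are welcome additions.
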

\begin{proof} The set of probability measure on $2$ is  $\G(2) = \{\delta_{\{0\}} +_{\alpha} \delta_{\{1\}} \, | \,\alpha \in \I\}$. Measurability of the obvious map $\G(2) \rightarrow \I$ is immediate since the $\sigma$-algebra on $\G(2)$ is defined such that the evaluation maps 
$ev_{\{0\}} : \G(2) \rightarrow \I$ and $ev_{\{1\}} : \G(2) \rightarrow \I$ are both measurable, so the $\sigma$-algebra on $\G(2)$ is generated by the sets $\{ \delta_{\{0\}} +_{\alpha} \delta_{\{1\}} \, | \,\alpha \in U\}_{U \in \sa_{\I}}$. 
\end{proof}

\begin{lemma} \label{structure} The function $st_{X,Y}: Y^X \rightarrow \G(Y)^{\G(X)}$ defined by $f  \mapsto \G(f)$ is a measurable function.
\end{lemma}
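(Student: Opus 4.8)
The plan is to transpose the statement across the tensor--hom adjunction of $\M$ and then recognize the transposed map as a composite of morphisms already shown to be measurable, the key one being the strength $\tau$ from the previous lemma. Concretely, by the adjunction $\G(X) \otimes \_ \dashv (\_)^{\G(X)}$ established earlier, the bijection $\M(\G(X) \otimes Y^X, \G(Y)) \cong \M(Y^X, \G(Y)^{\G(X)})$ sends a map to its transpose, and $st_{X,Y}$ is measurable if and only if its transpose $\widehat{st}_{X,Y}: \G(X) \otimes Y^X \rightarrow \G(Y)$, given on points by $(P, \ulcorner f \urcorner) \mapsto \G(f)(P) = Pf^{-1}$, is measurable. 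So it suffices to prove $\widehat{st}_{X,Y}$ is measurable.

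Next I would exhibit $\widehat{st}_{X,Y}$ as a composite. Applying the previous lemma with the second space taken to be $Y^X$, the component $\tau_{X,Y^X}: \G(X) \otimes Y^X \rightarrow \G(X \otimes Y^X)$, $(P, \ulcorner f\urcorner) \mapsto P\Gamma_{\ulcorner f \urcorner}^{-1}$, is measurable, where $\Gamma_{\ulcorner f \urcorner}: X \rightarrow X \otimes Y^X$ is the constant graph function $x \mapsto (x, \ulcorner f \urcorner)$. Post-composing with the pushforward $\G(ev_{X,Y}): \G(X \otimes Y^X) \rightarrow \G(Y)$ along the evaluation map $ev_{X,Y}: X \otimes Y^X \rightarrow Y$ gives
\[
\G(ev_{X,Y}) \circ \tau_{X,Y^X} : (P, \ulcorner f\urcorner) \longmapsto P\Gamma_{\ulcorner f \urcorner}^{-1} \longmapsto P\bigl(ev_{X,Y} \circ \Gamma_{\ulcorner f \urcorner}\bigr)^{-1}.
\]
Since $ev_{X,Y} \circ \Gamma_{\ulcorner f \urcorner} = f$ (this is precisely the commuting triangle $f = ev_{X,Y} \circ \Gamma_{\ulcorner f \urcorner}$ recorded when the SMCC structure of $\M$ was set up), the composite sends $(P, \ulcorner f \urcorner) \mapsto Pf^{-1}$; that is, $\widehat{st}_{X,Y} = \G(ev_{X,Y}) \circ \tau_{X,Y^X}$.

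Finally I would assemble the measurability. The map $\tau_{X,Y^X}$ is measurable by the previous lemma, $ev_{X,Y}$ is measurable from the construction of the function space $Y^X$, and the pushforward $\G(ev_{X,Y})$ is measurable because for each $\zeta \in \sa_Y$ one has $ev_\zeta \circ \G(ev_{X,Y}) = ev_{ev_{X,Y}^{-1}(\zeta)}$, which is a generating evaluation map on $\G(X \otimes Y^X)$ since $ev_{X,Y}^{-1}(\zeta) \in \sa_{X \otimes Y^X}$. Hence $\widehat{st}_{X,Y}$ is a composite of measurable maps and therefore measurable, and transposing back across the adjunction yields that $st_{X,Y}$ is measurable.

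The main obstacle is conceptual rather than computational: one must resist attacking $st_{X,Y}$ head-on by trying to show each $f \mapsto P(f^{-1}(\zeta))$ is measurable, since that expression equals $\int_X \chi_\zeta \circ f \, dP$ and its measurability in $f$ runs into the failure of the evaluation map to be jointly measurable with respect to the product $\sigma$-algebra, for which separate measurability of the sections is not enough. Routing through the already-established strength $\tau_{X,Y^X}$ sidesteps this difficulty completely, so the real work is spotting the factorization $\widehat{st}_{X,Y} = \G(ev_{X,Y}) \circ \tau_{X,Y^X}$; everything after that is bookkeeping with the adjunction and the defining $\sigma$-algebras.
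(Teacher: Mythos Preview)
Your proof is correct and takes essentially the same approach as the paper: both factor through the strength $\tau_{X,Y^X}$ from the previous lemma and the pushforward $\G(ev_{X,Y})$, invoking the SMCC adjunction to pass between $st_{X,Y}$ and its transpose. The only cosmetic difference is that the paper writes the factorization on the exponential side as $st_{X,Y} = \G(ev_{X,Y})^{id_{\G(X)}} \circ \tau_{X,Y^X}^{id_{\G(X)}} \circ \Gamma_{\_}$ (citing Kock), whereas you first establish $\widehat{st}_{X,Y} = \G(ev_{X,Y}) \circ \tau_{X,Y^X}$ on the tensor side and then transpose; these are adjoint formulations of the same composite.
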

\begin{proof}Using the SMCC structure of $\M$  the map $st_{X,Y}$ can be written as a composite of three other measurable functions
\be \nonumber
 \begin{tikzpicture}[baseline=(current bounding box.center)]
 	\node	(YX) 	at	(0,0)              {$Y^X$};
	\node	(GYGX) 	at	(6,0)	   {$\G(Y)^{\G(X)}$};
	\node         (T1)     at   (0,-2)     {$(\G(X) \otimes Y^X)^{\G(X)}$};
	\node      (T2)    at   (6,-2)   {$\G(X \otimes Y^X)^{\G(X)}$};

	\draw[->, above] (YX) to node  {$st_{X,Y}$} (GYGX);
	\draw[->, left] (YX) to node  [yshift=0pt]{${\Gamma_{\_}}$} (T1);
	\draw[->,below] (T1) to node {$\tau_{X,Y^X}^{id_{\G(X)}}$} (T2);
	\draw[->,right] (T2) to node {$\G(ev_{X,Y})^{id_{\G(X)}}$} (GYGX);
		
 \end{tikzpicture}
\ee
\noindent
which follows from \cite{Kock}.  For every $f \in \I^X$,   $st_{X,Y}(f): \G(X) \rightarrow \G(Y)$ is the map
\be \nonumber
\begin{array}{ccccccc}
\G(X) & \stackrel{\Gamma_f}{\longrightarrow} & \G(X) \otimes Y^X & \stackrel{\tau_{\G(X),Y^X}}{\longrightarrow} & \G(X \otimes Y^X) & \stackrel{\G(ev_{X,Y})} {\longrightarrow} & \G(Y) \\
P & \mapsto & (P,f) & \mapsto &  P {\Gamma_f}^{-1} & \mapsto & P {\Gamma_f}^{-1} ev_{X,Y}^{-1} 
\end{array}.
\ee
\end{proof}

Taking $Y = \I$ in  Lemma~\ref{structure}, and using Lemma~\ref{two},  it follows that the  composite of the two  measurable maps
\be \nonumber
 \begin{tikzpicture}[baseline=(current bounding box.center)]
 	\node	(IX) 	at	(-1,0)              {$\I^X$};
	\node	(GIGX) 	at	(2,0)	   {$\G(\I)^{\G(X)}$};
	\node         (IGX)     at   (5,0)     {$\I^{\G(X)}$};
	\node      (c)    at   (8.5,0)   {$\hat{\int}_X \_ \, d\_ \stackrel{def}{=} \mu_2' \circ st_{X,\I}$};

	\draw[->, above] (IX) to node  {$st_{X,\I}$} (GIGX);
	\draw[->, above] (GIGX) to node  [yshift=0pt]{$\mu'_2$} (IGX);
	
 \end{tikzpicture}
\ee
\noindent
is  the measurable map satisfying 
\be \nonumber
\begin{array}{lcl}
\left( \hat{\int}_X \_ \, d\_ \right)(f) &=& (\mu_2' \circ st_{X,I})(f) \\
&=& \mu_2(P\Gamma_f^{-1} ev_{X,I}^{-1})(\{1\}) \\
&=& \displaystyle{\int_{\underbrace{\delta_0 +_u \delta_1 \in \G(2)}_{\cong  u \in \I}}} \underbrace{ev_{\{1\}}(\delta_0 +_{u} \delta_1)}_{\cong id_{\I}(u)} \, d(P\Gamma_f^{-1}ev_{X,I}^{-1})) \\
&=& \int_{X \otimes Y^X} ev_{X,I}(x,g) \, dP \Gamma_f^{-1} \\
&=& \int_X ev_{X,\I}(\Gamma_f(x)) \, dP \\
&=& \int_X f \, dP
\end{array}
\ee

\begin{thm}   The map
\be \nonumber
 \begin{tikzpicture}[baseline=(current bounding box.center)]
 	\node	(IX) 	at	(0,0)              {$\G(X) \otimes \I^X$};
	\node	(IGX) 	at	(4,0)	   {$\I$};
         \node         (fP)    at   (0,-1)    {$(P,f)$};
         \node         (int)   at    (4,-1)    {$\int_X f \, dP$};
         
	\draw[->,above] (IX) to node {$\int_X \_ \, d\_$} (IGX);

	\draw[|->] (fP) to node  {} (int);	
 \end{tikzpicture}
\ee
is measurable.  In particular, 
for $P \in \G(X)$ the integral operator \mbox{$\int_X \_ \, dP : \I^X \rightarrow \I$} is a measurable function.  
\end{thm}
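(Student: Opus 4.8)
The plan is to reduce the statement to the measurability of the \emph{curried} integral operator, which is exactly what the construction immediately preceding the theorem delivers. Indeed, taking $Y=\I$ in Lemma~\ref{structure} and using the identification $\G(\I)=\I$ of Lemma~\ref{two}, the composite $\hat{\int}_X \_ \, d\_ = \mu_2' \circ st_{X,\I}$ is a measurable map $\I^X \rightarrow \I^{\G(X)}$ whose value at $f$ is the function $P \mapsto \int_X f \, dP$. Thus all the genuine measure-theoretic content --- the measurability of $\tau$, and hence of $st_{X,\I}$ --- is already in hand, and what remains is purely formal: to uncurry.

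First I would invoke the SMCC structure of $\M$ established earlier, namely the exponential adjunction $\G(X) \otimes \_ \dashv \_^{\G(X)}$, which furnishes the bijection $\M(\G(X) \otimes \I^X, \I) \cong \M(\I^X, \I^{\G(X)})$. Under this correspondence the measurable map $\hat{\int}_X \_\, d\_ : \I^X \rightarrow \I^{\G(X)}$ transposes to a measurable map $\G(X) \otimes \I^X \rightarrow \I$, explicitly the composite $ev_{\G(X),\I} \circ (Id_{\G(X)} \otimes \hat{\int}_X \_\, d\_)$. Evaluating this composite on a generator $(P,f)$ gives $ev_{\G(X),\I}(P, \hat{\int}_X \_\, d\_(f)) = (\hat{\int}_X \_\, d\_(f))(P) = \int_X f\, dP$, so the transpose is precisely the integral operator of the statement; being the transpose of a measurable map, it is measurable.

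For the ``in particular'' clause I would fix $P \in \G(X)$ and observe that $\int_X \_\, dP : \I^X \rightarrow \I$ factors as $ev_P \circ \hat{\int}_X \_\, d\_$, where $ev_P : \I^{\G(X)} \rightarrow \I$ is the point evaluation at $P$. Point evaluations are measurable by the very definition of the $\sigma$-algebra on the function space $\I^{\G(X)}$, and $\hat{\int}_X \_\, d\_$ is measurable, so the composite is measurable. (Alternatively, one may precompose the uncurried operator with the measurable constant graph function $f \mapsto (P,f)$, which is measurable by the construction of the tensor $\sigma$-algebra on $\G(X) \otimes \I^X$.)

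The substantive difficulty of the whole development lies upstream, in the two preceding lemmas: showing $\tau_{X,Y}$ is measurable (the computation decomposing the cylinder preimages $ev_\zeta^{-1}(W)$ via the constant graph functions), and deducing from it that $st_{X,\I}$ is measurable through the strength decomposition of \cite{Kock}. Once those are granted, the only thing to watch here is the bookkeeping of the identification $\G(\I)=\I$ and the verification that the transpose of $\hat{\int}_X \_\, d\_$ is literally $(P,f)\mapsto \int_X f\, dP$ rather than some reindexed variant; this is routine given the explicit formula already computed before the statement.
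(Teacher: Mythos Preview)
Your proposal is correct and follows essentially the same route as the paper: uncurry the already-established measurable map $\hat{\int}_X \_\, d\_ : \I^X \rightarrow \I^{\G(X)}$ via the SMCC adjunction to obtain the measurable integral operator on $\G(X)\otimes\I^X$, and for the ``in particular'' clause factor $\int_X \_\, dP$ as $ev_P \circ \hat{\int}_X \_\, d\_$. The paper's own proof is terser but identical in content, invoking the SMCC structure for the first claim and the same $ev_P$ factorization (displayed as a commutative triangle) for the second.
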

\begin{proof}  This follows immediately from the SMCC structure of $\M$ and the measurability of $\hat{\int_X} \_ \, d\_ $.

For a fixed $P \in \G(X)$, using the measurability of the map $\hat{\int_X} \_ \, d\_ $ and the map $ev_P$, which is measurable by construction of the $\sigma$-algebra on $\I^{G(X)}$, it follows  the diagram
\be \nonumber
 \begin{tikzpicture}[baseline=(current bounding box.center)]
 	\node	(IX) 	at	(0,0)              {$\I^X$};
	\node	(IGX) 	at	(3,0)	   {$\I^{\G(X)}$};
	\node         (I)  at    (1.5,-2.)           {$\I$};

	\draw[->,above] (IX) to node {$\hat{\int_X} \_ \, d\_$} (IGX);

	\draw[->, right] (IGX) to node  [yshift=0pt]{$ev_P$} (I);
	\draw[->, left] (IX) to node  {$\int_X \_ \, dP$} (I);
	
 \end{tikzpicture}
\ee
\noindent
commutes.   
\end{proof}

We observe that the map $\hat{\int_X} \_ \, d\_$ is a section of the map $\I^{\eta_X'} : \I^{\G(X)} \rightarrow \I^X$ obtained from the unit of the Giry monad $\eta_X'$.

\begin{lemma} \label{basicThm} There exists an isomorphism of convex spaces 
 \be  \nonumber
 \begin{array}{ccc}
 \T(X) & \stackrel{\phi}{ \longrightarrow}  & \G(X) \\
G & \mapsto & \nu_G
\end{array}
\ee
where $\nu_G(S) = G(\chi_S)$ for all $S \in \sa_X$.
\end{lemma}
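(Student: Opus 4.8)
The plan is to produce an explicit two-sided inverse to $\phi$ given by integration, namely the map $\theta:\G(X)\to\T(X)$, $P\mapsto \int_X \_\,dP$, and then to verify that $\phi$ is an affine bijection. An affine bijection automatically has an affine inverse, since applying $\phi^{-1}$ to the identity $\phi(\phi^{-1}(a)+_\alpha \phi^{-1}(b))=a+_\alpha b$ yields $\phi^{-1}(a+_\alpha b)=\phi^{-1}(a)+_\alpha\phi^{-1}(b)$, so it will suffice to produce $\theta$ and check the two round trips.

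First I would check that $\phi$ is well defined, that is, that $\nu_G$ is a genuine probability measure for each $G\in\T(X)$. Lemma~\ref{basic}(i) gives $\nu_G(X)=G(\chi_X)=1$ and $\nu_G(\emptyset)=0$, and Lemma~\ref{basic}(v) gives countable additivity: for a disjoint measurable cover $\{S_i\}_{i=1}^\infty$ of $S$ we have $\nu_G(S)=G(\chi_S)=\lim_{N}\sum_{i=1}^N G(\chi_{S_i})=\sum_{i=1}^\infty \nu_G(S_i)$. Affineness of $\phi$ is then immediate from the pointwise convex structure on $\T(X)$: $\nu_{G+_\alpha H}(S)=(G+_\alpha H)(\chi_S)=\alpha G(\chi_S)+(1-\alpha)H(\chi_S)=(\nu_G+_\alpha\nu_H)(S)$.

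Next I would show that $\theta$ lands in $\T(X)$, so that $\phi$ is surjective. For a fixed $P$, the functional $\int_X\_\,dP$ is measurable by the Theorem proved above; it is weakly averaging since $\int_X\overline u\,dP=u$; it is affine by linearity of the Lebesgue integral; and it preserves limits because by construction $\int_X f\,dP=\sup\{\int_X\psi\,dP\mid \psi\text{ simple},\ \psi\le f\}$. Surjectivity then follows from $\phi(\theta(P))(S)=\int_X\chi_S\,dP=P(S)$, i.e.\ $\phi\circ\theta=\mathrm{id}_{\G(X)}$.

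It remains to prove $\theta\circ\phi=\mathrm{id}_{\T(X)}$, equivalently $\int_X f\,d\nu_G=G(f)$ for all $f\in\I^X$; this also yields injectivity. On a simple function written as a convex sum $f=\sum_i a_i\chi_{S_i}$ (Lemma~\ref{wellDefined}), affineness of $G$ gives $G(f)=\sum_i a_iG(\chi_{S_i})=\sum_i a_i\nu_G(S_i)=\int_X f\,d\nu_G$. The decisive step, and precisely the place where the limit-preservation hypothesis is indispensable, is the passage from simple functions to arbitrary $f\in\I^X$: by Corollary~\ref{convex} every such $f$ is a limit of an increasing sequence of simple convex-sum functions, and since both $G$ and $\int_X\_\,d\nu_G$ preserve limits, each side equals $\sup\{G(\psi)\mid\psi\text{ simple},\ \psi\le f\}$, so they agree. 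This establishes that $\phi$ is an affine bijection, hence an isomorphism of convex spaces.
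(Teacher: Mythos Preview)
Your proof is correct and follows essentially the same route as the paper's: define the inverse via integration, verify $\nu_G$ is a probability measure using Lemma~\ref{basic}, check that $\int_X\_\,dP$ lies in $\T(X)$, and prove the two composites are identities by reducing to simple functions via Lemma~\ref{wellDefined} and then invoking the limit-preservation hypothesis. Your version is in fact slightly more complete, since you explicitly verify that $\phi$ is affine (and observe that an affine bijection has affine inverse), a point the paper's proof leaves implicit.
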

\begin{proof}
  The verification that $\nu_G$ defines a probability measure follows directly from the definition of $\nu_G$ in terms of $G$ and the characteristic functions by applying Lemma~\ref{basic} and the properties of $G$.  The only nontrivial aspect is the fact that, for any disjoint covering $\{S_i\}_{i=1}^{\infty}$ of a measurable set $S$ by measurable subsets, the property 
 \be \nonumber
 \begin{array}{lcll}
  \nu_G(S)&=&G(\chi_{S}) & \\
  &=&  \lim_{N \rightarrow \infty} \{ \sum_{i=1}^N G(\chi_{S_i}) \} &  \textrm{by Lemma~\ref{basic}(v) } \\
 &=& \lim_{N \rightarrow \infty} \{ \sum_{i=1}^N \nu_G(S_i) \} &
 \end{array}
 \ee
holds.

The inverse of $\phi$ is the map
 \be  \nonumber
 \begin{array}{ccc}
 \G(X) & \stackrel{\gamma}{ \longrightarrow}  & \T(X) \\
P & \mapsto & \int_X \_ \, dP 
\end{array}
\ee
which is measurable by the previous Theorem, and the function  $\int_X \_ \, dP$ has the three required properties
 (1) weakly averaging, (2) affine, and (3) preserves limits.   The property of preservation of limits follows from the monotone convergence theorem and Corollary~\ref{convex}.    

These two maps, $\phi$ and $\gamma$,  are inverses because for $G \in \T(X)$ and $f \in \I^X$ it follows
\be \nonumber
\begin{array}{lcll}
G(f) &=& \displaystyle{\sup_{ \psi \textrm{ simple}} }\{ G(\psi) \, | \, \psi \le f, \,  \psi \in \I^X \} & \textrm{by definition of } G \\ 
&=& \displaystyle{\sup_{ \psi \textrm{ simple} }} \{ G(\underbrace{\sum_{i=1}^n \alpha_i \chi_{A_i}}_{=\psi}) \, | \, \psi \le f, \,  \psi \in \I^X, \, \sum_{i=1}^n \alpha_i = 1\} & \text{ by Lemma~\ref{wellDefined}}\\
&=& \displaystyle{\sup_{ \psi \textrm{ simple} }} \{ \sum_{i=1}^n \alpha_i  G(\chi_{A_i}) \, | \, \psi \le f, \,  \psi \in \I^X,  \, \sum_{i=1}^n \alpha_i = 1\} & \textrm{affine property of }G \\
&=& \displaystyle{\sup_{ \psi \textrm{ simple} }} \{ \sum_{i=1}^n \alpha_i  \nu_G(A_i) \, | \, \psi \le f, \,  \psi \in \I^X,  \, \sum_{i=1}^n \alpha_i = 1\} & \textrm{def. of the map }\phi \\
&=& \displaystyle{\sup_{ \psi \textrm{ simple} }} \{ \int_X  \psi \, d\nu_G \, | \, \psi \le f, \,  \psi \in \I^X\} & \textrm{def. of the integral}\\
&=&  \int_X f \, d\nu_G  & \textrm{def. of the integral of }f\\
&=& \left( (\gamma \circ \phi)(G)\right) (f)
\end{array}
\ee
and for $P \in \G(X)$ and $A \in \sa_X$ it follows
\be \nonumber
\begin{array}{lcl}
P(A) &=& \int_X \chi_A \, dP \\
&=& \gamma(P)(\chi_A) \\
&=& \left( (\phi \circ \gamma)(P)\right)(A)
\end{array}
\ee
\end{proof}

\begin{thm}  \label{monadIso}
The isomorphism of convex spaces in Lemma~\ref{basicThm} extends to a natural isomorphism of monads \mbox{$\phi: \T \rightarrow \G$}. 
\end{thm}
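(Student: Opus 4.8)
The plan is to show that the family of convex-space isomorphisms $\phi_X : \T(X) \to \G(X)$ of Lemma~\ref{basicThm} assembles into a morphism of monads. Each $\phi_X$ is in fact an isomorphism in $\M$: it is measurable because $ev_S \circ \phi_X = ev_{\chi_S}$ (the generating evaluation of $\sa_{\T(X)}$ at $\chi_S \in \I^X$) for every $S \in \sa_X$, and its inverse $\gamma = \int_X \_ \, d\_$ is measurable by the Theorem. Consequently, once $\phi$ is shown to be a monad morphism it is automatically a natural \emph{isomorphism} of monads. Writing $\eta^\T,\mu^\T$ for the unit and multiplication of $\T$ inherited from the double dualization monad and $\eta^\G,\mu^\G$ for the standard Giry unit $\eta^\G_X(x)=\delta_x$ and multiplication $\mu^\G_X(\rho)(S)=\int_{\G(X)} ev_S \, d\rho$, I must verify three conditions: (a) naturality of $\phi$; (b) $\phi\circ\eta^\T=\eta^\G$; and (c) $\phi_X\circ\mu^\T_X=\mu^\G_X\circ\G(\phi_X)\circ\phi_{\T(X)}$.

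For naturality, fix a measurable $f:X\to Y$ and $G\in\T(X)$; both $\phi_Y(\T(f)(G))$ and $\G(f)(\phi_X(G))$ are probability measures on $Y$, so it suffices to compare them on an arbitrary $T\in\sa_Y$. Using the pushforward description $\T(f)(G)[g]=G(g\circ f)$ established at the end of Section~3 together with $\chi_T\circ f=\chi_{f^{-1}(T)}$, the left side gives $G(\chi_{f^{-1}(T)})=\nu_G(f^{-1}(T))$, while the right side is $(\nu_G f^{-1})(T)=\nu_G(f^{-1}(T))$ by definition of $\G(f)$; the naturality square commutes. For the unit, $\phi_X(\eta^\T_X(x))=\nu_{ev_x}$ and $\nu_{ev_x}(S)=ev_x(\chi_S)=\chi_S(x)=\delta_x(S)$ for all $S\in\sa_X$, so $\phi_X\circ\eta^\T_X=\eta^\G_X$. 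Both verifications are routine.

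The substantive step is the multiplication law (c). Fix $Q\in\T(\T(X))$ and $S\in\sa_X$. On the left, recalling from Section~3 that $\mu^\T_X(Q)(f)=Q(ev_f)$ where $ev_f:\T(X)\to\I$ is evaluation at $f$, I obtain $\phi_X(\mu^\T_X(Q))(S)=\mu^\T_X(Q)(\chi_S)=Q(ev_{\chi_S})$, and I record the key identity $ev_{\chi_S}=ev_S\circ\phi_X$ as maps $\T(X)\to\I$, since $ev_{\chi_S}(G)=G(\chi_S)=\nu_G(S)=ev_S(\phi_X(G))$. On the right, set $\nu_Q:=\phi_{\T(X)}(Q)\in\G(\T(X))$; applying $\G(\phi_X)$ and then $\mu^\G_X$ and using the change-of-variables formula for the pushforward $\nu_Q\phi_X^{-1}$ gives $\mu^\G_X(\nu_Q\phi_X^{-1})(S)=\int_{\G(X)} ev_S\, d(\nu_Q\phi_X^{-1})=\int_{\T(X)}(ev_S\circ\phi_X)\, d\nu_Q$. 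Now invoke Lemma~\ref{basicThm} at the space $\T(X)$: since $\gamma=\phi^{-1}$, we have $\int_{\T(X)} \_ \, d\nu_Q=\gamma(\phi_{\T(X)}(Q))=Q$ as functionals on $\I^{\T(X)}$, so the right side equals $Q(ev_S\circ\phi_X)$. Comparing with the left side via $ev_{\chi_S}=ev_S\circ\phi_X$ yields equality of the two probability measures on every $S\in\sa_X$, establishing (c).

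The main obstacle I anticipate is the bookkeeping in (c): one must check that $ev_S\circ\phi_X$ is a genuine element of $\I^{\T(X)}$, so that both $Q(\cdot)$ and $\int_{\T(X)} \_ \, d\nu_Q$ may legitimately be applied to it, which follows from measurability of $ev_S$ and of $\phi_X$; and one must correctly use the isomorphism of Lemma~\ref{basicThm} at the \emph{higher} level $\T(X)$ rather than at $X$, together with the pushforward identity $\int f\, d(\nu g^{-1})=\int (f\circ g)\, d\nu$ for bounded measurable $f$. With (a), (b), and (c) in hand, naturality plus the componentwise invertibility of $\phi$ established at the outset make $\phi:\T\to\G$ a natural isomorphism of monads, as claimed.
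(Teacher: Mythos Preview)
Your proposal is correct and follows essentially the same route as the paper: you verify measurability of $\phi_X$ via $ev_S\circ\phi_X=ev_{\chi_S}$ (and of its inverse via the integral-operator theorem), check naturality using $\chi_T\circ f=\chi_{f^{-1}(T)}$, and establish the unit and multiplication laws by the same computations, in particular exploiting the identity $ev_{\chi_S}=ev_S\circ\phi_X$ together with Lemma~\ref{basicThm} applied at the level $\T(X)$ and the change-of-variables formula for pushforwards. The paper organizes the measurability step slightly differently (arguing that the $ev_{\chi_S}$ already generate the subspace $\sigma$-algebra on $\T(X)$), but the substance of the argument is the same.
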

\begin{proof}   
First we show that $\phi_X:\T(X) \rightarrow \G(X)$ is an isomorphism of measurable spaces which requires showing $\T(X)$ with its subspace \mbox{$\sigma$-algebra} is isomorphic to the \mbox{$\sigma$-algebra} on $\G(X)$.   Recall that the Giry monad is endowed with the smallest \mbox{$\sigma$-algebra} such that each of the evaluation maps $ev_S: \G(X) \rightarrow \I$ sending a probability measure $P \mapsto P(S)$ is  measurable, for every measurable set $S$ in $X$. 
On the other hand the function space $\I^{\I^X}$ has the smallest $\sigma$-algebra such that each of the evaluation maps
\be \nonumber
 \begin{tikzpicture}[baseline=(current bounding box.center)]
 	\node	(IIX) 	at	(0,0)              {$\I^{\I^X}$};
	\node	(I)	at	(3,0)	               {$\I$};

	\draw[->, above] (IIX) to node  {$ev_f$} (I);
 \end{tikzpicture}
\ee
\noindent
is measurable for every measurable function $f:X \rightarrow \I$, so for $U \in \B_{\I}$ it follows that the set 
\be \label{algebra}
ev_f^{-1}(U) = \{ \I^X \stackrel{G}{\rightarrow} \I  \, | \, G(f) \in U \}
\ee
is measurable in $\I^{\I^X}$, and sets of this form, as $f$ varies over $\I^X$ and $U$ varies over $\B_I$, form a generating set for the $\sigma$-algebra on $\I^{\I^X}$.  Being more economical it suffices to take the generating set on the characteristic functions $f=\chi_S$ for all $S \in \sa_X$.  Restriction of the $\sigma$-algebra generated by these elements $\{ ev_{\chi_S}^{-1}(U) \}_{S \in \sa_X, U \in \sa_I}$ to the subset $\T(X)$ gives the $\sigma$-algebra on 
 $\T(X)$.  Under the mapping $\phi$ the generating set elements in (\ref{algebra}) get mapped to the subsets of $\G(X)$ corresponding to the 
 preimage of the diagonal map in the diagram
 \be \nonumber
 \begin{tikzpicture}[baseline=(current bounding box.center)]
 	\node	(IIX) 	at	(0,0)              {$\T(X)$};
	\node	(I)	at	(3,0)	               {$\I$};
	\node         (PX)  at    (0,-1.7)           {$\G(X)$};

	\draw[->, above] (IIX) to node  {$ev_{\chi_S}$} (I);
	\draw[->, right] (PX) to node  [yshift=-5pt]{$\int_X \chi_S \, d\_ = ev_S$} (I);
	\draw[->, left] (IIX) to node  {$\phi_X$} (PX);
	
         \node	(IIX2) 	at	(6,0)              {$G$};
	\node	(I2)	at	(9,0)	               {$G(\chi_S) = \nu_G(S)$};
	\node         (PX2)  at    (6,-1.7)           {$\nu_G$};

	\draw[|->, above] (IIX2) to node  {$$} (I2);
	\draw[|->, right] (PX2) to node  [yshift=-3pt]{$$} (I2);
	\draw[|->, left] (IIX2) to node  {$$} (PX2);
 \end{tikzpicture}
\ee
\noindent
which  are the generating elements for the $\sigma$-algebra of $\G(X)$. 
The converse then follows similarly mapping the generating elements of $\G(X)$ to the generating elements of $\T(X)$.

To show naturally let  $f:X \rightarrow Y$ be a measurable function and consider the right hand diagram in
\begin{figure}[H] 
\begin{equation}   \nonumber
 \begin{tikzpicture}[baseline=(current bounding box.center)]
         \node  (X)   at  (0,0)     {$\T(X)$};
         \node  (Y)  at (0,-1.8)    {$\T(Y)$};
         \node  (IX)  at (3,0)   {$\G(X)$};
         \node  (IY)  at (3,-1.8)   {$\G(Y)$};
         \node  (X2)    at  (-2,0)    {$X$};
         \node  (Y2)   at  (-2,-1.8)   {$Y$};
                   
	\draw[->,left] (X2) to node {$f$} (Y2);
	\draw[->,left] (X) to node {$\T(f)$} (Y);
	\draw[->,right] (IX) to node [xshift=-2pt,yshift=0pt] {$\G(f)$} (IY);
	\draw[->,above] (X) to node {$\phi_X$} (IX);
	\draw[->,above] (Y) to node {$\phi_Y$} (IY);
	 \end{tikzpicture}
 \end{equation} 
 \caption*{Diagram 2.  The naturality of $\phi$.}
 \label{fig:naturality}
 \end{figure}
 \noindent
The map 
 $\T(f)$ is just the pushforward map
shown in \mbox{Diagram 1}, restricted to the domain $\T(X)$ with the codomain $\T(Y)$.  Consequently, for $G \in \T(X)$ and $S \in \sa_Y$, 
it follows the computation of the south-east path of  Diagram 2 
 yields 
 \be \label{arrowFunctor}
\begin{array}{lcl}
\left( \phi_Y(\T(f)(G)) \right)[S] &=& \nu_{G \circ \I^f}(S) \\
&=& (G \circ \I^f)(\chi_S)  \\
&=& G(\chi_S \circ f) \\
&=& G(\chi_{f^{-1}(S)}) \\
&=& \nu_G( f^{-1}(S)) 
\end{array}.
\ee
whereas the computation of the east-south path  gives
\be
\begin{array}{ccll}
\G(f)( \phi_X(G))[S] &=&  \G(f) (\nu_G) [S]&\\
&=& \nu_G  (f^{-1}(S))& 
\end{array}
\ee
and hence the map $\phi$ is a natural transformation.
This natural transformation 
has the inverse natural transformation specified in Theorem~\ref{basicThm}.

The natural isomorphism $\phi: \T \rightarrow \G$ is a morphism of monads as it makes the two requisite diagrams 

\be \nonumber
 \begin{tikzpicture}[baseline=(current bounding box.center)]
 	\node	(IdX) 	at	(1,0)              {$Id$};
	\node	(P)	at	(4,0)	               {$\T$};
	\node        (G)   at     (4,-1.8)       {$\G$};

	\draw[->, above] (IdX) to node  {$\eta$} (P);
	\draw[->,below,left] (IdX) to node [xshift=-3pt,yshift=-3pt] {$\eta'$} (G);
	\draw[->,right] (P) to node {$\phi$} (G);
	
	\node	(PP)	at	(7.5,0)	        {$\T \circ \T$};
	\node      (P2)   at      (11,0)     {$\T$};
	\node      (GG)  at      (7.5,-1.8)        {$ \G \circ \G$};
	\node      (G2)  at      (11,-1.8)     {$\G$};
	
	\draw[->,above] (PP) to node {$\mu$} (P2);
	\draw[->,left] (PP) to node {$\phi \cdot \phi$} (GG);
	\draw[->, below] (GG) to node {$\mu'$} (G2);
	\draw[->,right] (P2) to node {$\phi$} (G2);

 \end{tikzpicture}
\ee
\noindent
commute, where  the natural transformation $\phi \cdot \phi$ is defined at component $X$, for any for $Q \in \T(\T(X))$,  by $(\phi \cdot \phi)_X(Q) = \G(\phi_X)(\phi_{\T(X)}(Q)) = \phi_{\G(X)}(\T(\phi_X)Q)$.\footnote{The term $\phi \cdot \phi$ is the ``horizontal   composite'' of two natural transformations, and the given equality in the definition of the horizontal composite is an easy verification \cite[Page 42]{MacLane}.}  Recalling that the unit of the Giry monad is defined by $\eta'_X(x) = \delta_x$ while the counit is specified by $\mu'_X(Q)(S) = \int_{q \in \G(X)} q(S) \, dQ$ for all $S \in \sa_X$, the commutativity of the left diagram follows from
\be \nonumber
\begin{array}{lcl}
\left( \phi_X(\eta_X(x)) \right)(S) &=& \left(\phi_X(ev_x)\right)(S) \\
&=& \nu_{ev_x}(S) \\
&=& ev_x(\chi_S) \\
&=& \chi_S(x) \\
&=& \delta_x(S) \\
&=& \eta'_X(x)(S)
\end{array}
\ee
while the commutativity of the right diagram is established by computing that the east-south path yields
\be \nonumber
\begin{array}{lcl}
\left( \phi_X( \mu_X(Q)) \right)(S) &=&  \nu_{\mu_X(Q)}(S) \\
&=&  \mu_X(Q)(\chi_S)\\
&=& Q(ev_{\chi_S})
\end{array}
\ee
while computing the south-east path yields the same value because
\be \nonumber
\begin{array}{lcl}
((\mu_X^{'} \circ (\phi \cdot \phi)_{X})Q)(S) &=& \mu_X^{'} \left(\G(\phi_X)(\underbrace{\phi_{\T(X)}(Q)}_{=\nu_Q})\right)(S) \\
&=& \left( \mu_X^{'}(\nu_{Q } \circ\phi_X^{-1})\right)(S) \\
&=& \int_{q \in \G(X)} ev_S(q) \, d(\nu_{Q} \circ \phi_X^{-1}) \\
&=& \int_{p \in \T(X)}(\underbrace{ev_S \circ \phi_X}_{=ev_{\chi_S}})(p) \, d\nu_Q \\
&=& Q(ev_{\chi_S})
\end{array}.
\ee

\end{proof}

\section{Constructing the functor $\iota:\C \rightarrow \M$}    \label{sec:iotaMap}
 
For $A$ a convex space endow the set of functions $\Set(A,\I)$  with the initial $\sigma$-algebra generated by the set of evaluation maps  to obtain the measurable space $\I^A \stackrel{def}{=}(\Set(A,\I), \sa_{init})$
where $\sa_{init}$ denotes the initial $\sigma$-algebra generated by the set of evaluation maps, 
\be \nonumber
\{ \Set(A,\I) \stackrel{ev_a}{\longrightarrow} \I\, | a \in A\}.  
\ee
 However we use the notation $\I^A$ for uniformity as the emphasis in this construction is on the $\sigma$-algebra construction being identical to that used in the construction of function spaces in $\M$, and the fact that $\M(A,\I)$ is (generally) without meaning as $A$ has no $\sigma$-algebra associated with it. The object $\I^{\I}$ will always refer to $\M(\I,\I)$ with the $\sigma$-algebra making the evaluation maps measurable so no confusion should arise.\footnote{Recall, convex spaces are  denoted using $A,B$, and $C$ whereas measurable spaces are denoted using $X,Y$, and $Z$.}

 As sets, $\C(A,\I) \subseteq \Set(A,\I)$, so we can endow $\C(A,\I)$ with the  subspace $\sigma$-algebra of $\I^A$, i.e., endow $\C(A,\I)$ with the initial $\sigma$-algebra generated by the inclusion map $\C(A,\I) \hookrightarrow \I^A$.  As we now want to think of this space as a measurable space in its own right we could emphasize it explicitly using the notation $(\C(A,\I),\sa)$.  However,  for our purposes the object $(\C(A,\I),\sa)$ will be used within the context of the category $\M$ so  the notation $\C(A,\I)$ is used to represent the measurable space.  Using this notation, the object $\C(\I,\I)$, within the context of being an object in $\M$, is the set of affine morphisms endowed with the $\sigma$-algebra making the evaluation maps \mbox{$\C(\I,\I) \stackrel{ev_a}{\longrightarrow} \I$} at every point in $a \in \I$ a measurable function.
 
  The space  $\I^{\C(A,\I)}$ is the set $\M( \C(A,\I), \I)$ endowed with the initial  $\sigma$-algebra making the evaluation maps $\I^{\C(A,\I)} \stackrel{ev_h}{\longrightarrow} \I$ measurable, for every $h \in \C(A,\I)$.  In this expression,  $\I^{\C(A,\I)}$, both the ``exponent'' $\C(A,\I)$ and the ``base'' $\I$ are measurable spaces so the standard construction for function spaces in $\M$ applies.    Let  $\C_w(\C(A,\I),\I)$ denote the subset of  $\I^{\C(A,\I)}$ consisting of the weakly averaging affine functions which preserve limits, and endow $\C_w(\C(A,\I),\I) \subseteq \I^{\C(A,\I)}$ with the subspace $\sigma$-algebra.  Consequently, for every   $h \in \C(A,\I)$,
all  the evaluation maps

\be \nonumber
 \begin{tikzpicture}[baseline=(current bounding box.center)]
 	\node	(prod) 	at	(0,0)              {$\C_w(\C(A,\I),\I)$};
	\node         (I)    at      (4,0)             {$\I$};
	\draw[->, above] (prod) to node  {$ev_h$} (I);
 \end{tikzpicture}
\ee
\noindent
are measurable.     For $k:A \rightarrow B$ an  affine morphism of convex spaces we have the map

\begin{equation}   \nonumber
 \begin{tikzpicture}[baseline=(current bounding box.center)]
         \node  (PA)  at (0,0)    {$\C_w(\C(A,\Rbar),\I)$};
         \node  (I)     at    (8,0)     {$ \C_w(\C(B,\Rbar),\I)$};

         \node  (PA2)  at (0,-.7)    {$G$};
         \node  (I2)     at    (8,-.7)     {$G \circ \C(k,\I)$};
         
	\draw[->,above] (PA) to node {$\C_w(\C(k,\Rbar),\I)$} (I);
	\draw[|->] (PA2) to node {} (I2);
	
\end{tikzpicture}
 \end{equation}
 \noindent
defined on all $g \in \C(B,\I)$ by $\C(k,\I)(g) = g \circ k$.    This yields a functor
\be \nonumber
\iota(\bullet) \stackrel{def}{=} \C_w(\C(\bullet,\Rbar),\I): \C \rightarrow \M.
\ee

 \begin{lemma} \label{cD} For $k: A \rightarrow B$ and $g: B \rightarrow \Rbar$ affine morphisms in $\C$  the $\M$ diagram
\begin{equation}   \nonumber
 \begin{tikzpicture}[baseline=(current bounding box.center)]
         \node   (ARR)  at  (0,1.75)    {$\Mc(A)$};
         \node  (BRR)  at (0,0)    {$\Mc(B)$};
         \node  (I)  at  (3.2,0)   {$\I$};

	\draw[->,above,right] (ARR) to node [yshift=3pt]{$ev_{g \circ k}$} (I);
	\draw[->,left] (ARR) to node {$\iota(k)$} (BRR);
	\draw[->,below] (BRR) to node {$ev_g$} (I);

	\end{tikzpicture}
 \end{equation}
 \noindent
 commutes.
 \end{lemma}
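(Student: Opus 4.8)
The plan is to verify commutativity by a direct element chase, unwinding the definitions of $\iota(k)$ and of the hom action $\C(k,\I)$. Both legs of the triangle are functions $\iota(A) \to \I$, so it suffices to show they agree on an arbitrary $G \in \iota(A) = \C_w(\C(A,\I),\I)$; once pointwise agreement is established, the two measurable maps coincide and the diagram commutes.

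First I would recall the variance carefully. The morphism $\C(k,\I) : \C(B,\I) \to \C(A,\I)$ is the precomposition map $g \mapsto g \circ k$, so although $k$ runs $A \to B$, the induced map $\C(k,\I)$ runs in the opposite direction, from the $B$-hom to the $A$-hom. By the definition of the functor $\iota$ on arrows, the map $\iota(k) = \C_w(\C(k,\I),\I)$ then acts by postcomposition, namely $\iota(k)(G) = G \circ \C(k,\I)$.

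Now chase $G$ along the path through $\Mc(B)$. We have $\iota(k)(G) = G \circ \C(k,\I) \in \C_w(\C(B,\I),\I)$, whence
\[
ev_g\bigl(\iota(k)(G)\bigr) = \bigl(G \circ \C(k,\I)\bigr)(g) = G\bigl(\C(k,\I)(g)\bigr) = G(g \circ k).
\]
Along the direct edge, evaluation at the affine morphism $g \circ k \in \C(A,\I)$ gives immediately $ev_{g \circ k}(G) = G(g \circ k)$. The two values coincide for every $G$, so $ev_g \circ \iota(k) = ev_{g \circ k}$, as claimed.

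I do not expect any genuine obstacle: the lemma is a definitional unwinding of the contravariant hom and of the postcomposition action defining $\iota$ on arrows. The only point needing minor care is the bookkeeping of variance — ensuring $\C(k,\I)$ points from $\C(B,\I)$ to $\C(A,\I)$ so that the composite $G \circ \C(k,\I)$ is well-typed — together with the fact, already secured in the construction of $\iota$, that $\iota(k)(G)$ lands in $\C_w(\C(B,\I),\I)$ and not merely in $\I^{\C(B,\I)}$, so that the evaluation $ev_g$ may legitimately be applied.
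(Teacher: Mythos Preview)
Your proof is correct and is essentially identical to the paper's own argument: a direct element chase unwinding the definitions of $\iota(k)$ and $\C(k,\I)$ to show both legs send $G$ to $G(g\circ k)$. The paper merely writes the same chain of equalities starting from $ev_{g\circ k}(K)$ and ending at $(ev_g\circ\iota(k))(K)$, while you compute the two legs separately and compare; there is no substantive difference.
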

 \begin{proof}
 For all $K \in \Mc(A)$
 \be  \nonumber
 \begin{array}{lcl}
 ev_{g \circ k}(K) &=& K(g \circ k) \\
 &=& (K \circ \C(k,\I))(g) \\
 &=& ev_g(K \circ \C(k,\I)) \\
 &=& \left(ev_g \circ \Mc(k)\right)(K)
 \end{array}
 \ee
 \end{proof}
  
\section{The Giry monad as a codensity monad} 
The functor $\iota:\C \rightarrow \M$ induces the functor
\be \nonumber
\begin{array}{llccc}
\M^{\iota} &:& \M^{\M} & \longrightarrow & \M^{\C} \\
&:_{ob}& F & \mapsto & F \circ \iota \\
&:_{ar}& F \stackrel{\alpha}{\longrightarrow} G & \mapsto & F \circ \iota \stackrel{\alpha \circ \iota}{\longrightarrow} G \circ \iota
\end{array}
\ee
and a universal arrow from the functor $\M^{\iota}$ to the object $\iota \in_{ob} \M^{\C}$ is  called the right Kan extension of $\iota$ along $\iota$.  Like any universal arrow the right Kan extension is a pair $(\Ri, \epsilon)$ where $\Ri \in_{ob} \M^{\M}$ and $\epsilon: \Ri \circ \iota \Rightarrow \iota$ is the natural transformation having the property  such that
if $\alpha: \mS \circ \iota \Rightarrow \iota$ then  there exists a unique natural transformation $\overline{\alpha}: \mS \Rightarrow \Ri$ such that the diagram on the right in
\begin{figure}[H]
\begin{equation}   \nonumber
 \begin{tikzpicture}[baseline=(current bounding box.center)]
         \node  (P)   at  (-2.5,0)     {$\Ri$};
         \node  (S)  at (-2.5,-1.8)    {$\mS$};
         \node  (PM)  at (0,0)   {$\Ri \circ \iota$};
         \node  (SM) at (0,-1.8)  {$\mS \circ \iota$};
         \node  (M)  at  (2.6,0)    {$\Mc$};
         
         \node  (com) at  (-2,-2.7)  {in $\M^{\M}$};      
         \node  (com2) at  (1,-2.7)  {in $\M^{\C}$};      
                   
	\draw[->,left] (S) to node {$\overline{\alpha}$} (P);
	\draw[->,above] (PM) to node {$\epsilon$} (M);
	\draw[->,right,below] (SM) to node [xshift=2pt,yshift=-2pt] {$\alpha$} (M);
	\draw[->,left] (SM) to node {${\overline{\alpha}} \circ \iota$} (PM);
	
	 \end{tikzpicture}
 \end{equation} 
  \caption*{Diagram 3. The codensity monad of $\iota$ as a universal arrow.}
 \label{fig:rightKanU}
 \end{figure}
 \noindent
commutes.  If the pair $(\Ri, \epsilon)$ is the right Kan extension of $\iota$ along $\iota$ the $\Ri$ is a codensity monad of $\iota$.
The property of being a right Kan extension of $\iota$ along $\iota$ can equivalently be expressed in terms of the diagram    
\begin{figure}[H]
\begin{equation}   \nonumber
 \begin{tikzpicture}[baseline=(current bounding box.center)]
         \node  (C)   at  (0,0)     {$\C$};
         \node  (M1)  at (3.5,0)    {$\M$};
         \node  (M2)  at (3.5,-2.5)   {$\M$};
         \node  (comp) at  (2.8,-.5)  {$$};      
         \node  (comp2) at  (1.8,-.9)  {$$};      
         \node  (ph1)  at  (3.1,-1.3) {};
         \node  (ph2)  at  (3.8,-1.3) {};
                   
	\draw[->,above] (C) to node {$\iota$} (M1);
	\draw[->,left] (C) to node [xshift=-3pt] {$\iota$} (M2);
	\draw[->,out=235,in=135,looseness=.5,left] (M1) to node [xshift=2pt,yshift=0pt] {$\Ri$} (M2);
	\draw[->,out=-60,in=60,looseness=.5,right] (M1) to node [xshift=2pt,yshift=0pt] {$\mS$} (M2);
	
	\draw[->,above] (ph2) to node {$\overline{\alpha}$} (ph1);
	\draw[->,above] (comp) to node {$\epsilon$} (comp2);
		
	 \end{tikzpicture}
 \end{equation} 
 \end{figure}
 \noindent
which characterizes the property that the natural transformation $\epsilon: \Ri \circ \iota \Rightarrow \iota$ satisfies the condition that if  the pair $(\mS,\alpha)$  also yields a natural transformation $\alpha: \mS \circ \iota \Rightarrow \iota$ then there exists a unique natural transformation $\overline{\alpha}: \mS \Rightarrow \Ri$ such that $\alpha = \epsilon \circ (\overline{\alpha} \circ \iota)$.   

As $\M$ is complete the codensity monad $\Ri$ can be constructed pointwise \\
 \mbox{\cite[Theorem 1, page 233]{MacLane}} using the slice category $\Xit$ of objects under $X \in_{ob} \M$, which has  objects and arrows 
\begin{figure}[H]
\begin{equation}   \nonumber
 \begin{tikzpicture}[baseline=(current bounding box.center)]
         \node  (ob)  at  (-2,-1)  {objects: $(f,A)$};
         \node  (X)  at (0,0)    {$X$};
         \node  (iA)  at    (0,-2)     {$\iota(A)$};
         
	\draw[->,right] (X) to node {$f$} (iA);
	
	 \node  (ar)  at  (2.8,-1)  {arrows: $k$};
         \node  (X2)  at (5,0)    {$X$};
         \node  (iA2)  at    (4,-2)     {$\iota(A)$};
         \node  (iB2)  at    (6,-2)     {$\iota (B)$};
         
	\draw[->,left] (X2) to node {$f$} (iA2);
	\draw[->,right] (X2) to node {$g$} (iB2);
	\draw[->,below] (iA2) to node {$\iota(k)$} (iB2);
	
\end{tikzpicture}
 \end{equation}
   \caption*{Diagram 4.  The slice category $\Xit$.}
 \label{fig:slice}
 \end{figure}
 \noindent
where $A,B \in_{ob} \C$, $f,g \in_{ar} \M$ and $k \in_{ar} \C$.  There is a projection functor \mbox{$Q:\Xit \rightarrow \C$} mapping the objects $(f,A) \mapsto A$ and arrows $k \mapsto k$ which when composed with $\iota$ yields a composite functor whose limit
\be \nonumber    
 \displaystyle{ \lim_{\leftarrow}} \left( \Xit \stackrel{Q}{\longrightarrow} \C \stackrel{\iota}{\longrightarrow} \M \right) 
\ee
we claim is precisely $\T(X)$.  Towards this end we require the following construction.

Suppose \mbox{$f: X \rightarrow \iota(A)$} is an object in the slice category $\Xit$.    
Since $\iota(A)$ is a subobject $\iota(A) \hookrightarrow \I^{\C(A,\Rbar)}$ in $\M$,  by  the SMCC structure of $\M$,  $f$ determines a map
\begin{equation}   \label{hatDef}
 \begin{tikzpicture}[baseline=(current bounding box.center)]
         \node  (X)  at (0,0)    {$ \C(A,\Rbar)$};
         \node  (I)     at    (3,0)     {$\I^X$};
         \node  (com)  at  (8,0)  {$\hat{f}[h](x)=f(x)[h] \quad \forall h \in \C(A,\Rbar), \, \forall x \in X$};         
	\draw[->,above] (X) to node {$\hat{f}$} (I);
	
\end{tikzpicture}
 \end{equation}
 \noindent
where  the notation ``$[h]$'' is used to emphasize that the argument is itself a function and to avoid excessive parentheses.  Using  the definition of $\hat{f}$ and the fact $f(x) \in \iota(A)$ we obtain following result.
\begin{lemma} Given the object $f: X\rightarrow \iota(A)$ in $\Xit$ the map $\hat{f}$ defined by (\ref{hatDef}) is a weakly averaging affine map which preserves limits.
\end{lemma}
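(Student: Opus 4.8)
The plan is to reduce every one of the three assertions about $\hat{f}$ to the corresponding property of the individual functionals $f(x) \in \iota(A)$, exploiting the fact that the convex structure, the pointwise order, and hence suprema on $\I^X$ are all computed coordinatewise in $x$. The key starting observation is that post-composing $\hat{f}$ with the evaluation map $ev_x : \I^X \rightarrow \I$ recovers $f(x)$ exactly: by the defining formula (\ref{hatDef}), for every $h \in \C(A,\Rbar)$ we have $(ev_x \circ \hat{f})[h] = \hat{f}[h](x) = f(x)[h]$, so $ev_x \circ \hat{f} = f(x)$. Since $f$ is an object of $\Xit$, its values lie in $\iota(A) = \C_w(\C(A,\Rbar),\I)$, so each $f(x)$ is by construction a weakly averaging affine functional on $\C(A,\Rbar)$ which preserves limits. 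Here the three conditions on $\hat{f}$ are read in the evident generalization to a map valued in $\I^X$ rather than in $\I$, and I would verify each of them ``one $x$ at a time''.

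First I would dispatch the weakly averaging and affine conditions, which are immediate. For a constant $\overline{u} \in \C(A,\Rbar)$, weak averaging of each $f(x)$ gives $\hat{f}[\overline{u}](x) = f(x)[\overline{u}] = u$ for all $x$, so $\hat{f}[\overline{u}] = \overline{u}$ in $\I^X$. For $h_1, h_2 \in \C(A,\Rbar)$ and $\alpha \in \I$, affineness of each $f(x)$ together with the pointwise convex structure on both $\C(A,\Rbar)$ and $\I^X$ yields $\hat{f}[h_1 +_\alpha h_2](x) = f(x)[h_1] +_\alpha f(x)[h_2] = (\hat{f}[h_1] +_\alpha \hat{f}[h_2])(x)$ for every $x$, which is precisely affineness of $\hat{f}$.

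The one step requiring care is preservation of limits, since the codomain is now $\I^X$ and the supremum in the defining condition must be interpreted in $\I^X$. I would argue that this supremum is the pointwise one: fixing $h$ and applying preservation of limits for each $f(x)$ gives $\hat{f}[h](x) = f(x)[h] = \sup\{f(x)[\psi] \,|\, \psi \textrm{ simple}, \psi \le h\} = \sup\{\hat{f}[\psi](x) \,|\, \psi \textrm{ simple}, \psi \le h\}$, where the same index set of simple $\psi \le h$ is used for every $x$. The right-hand side is exactly the value at $x$ of the pointwise supremum of the family $\{\hat{f}[\psi]\}$, so $\hat{f}[h] = \sup\{\hat{f}[\psi] \,|\, \psi \textrm{ simple}, \psi \le h\}$ holds in $\I^X$. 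It then only remains to confirm that this supremum genuinely lands in $\I^X$, i.e. is measurable, and this is automatic: the pointwise supremum equals $\hat{f}[h]$, which is an element of $\I^X$ because $\hat{f}$ is the transpose of $f$ under the SMCC adjunction $\C(A,\Rbar) \otimes \_ \dashv \_^{\C(A,\Rbar)}$ of $\M$ and hence a genuine morphism into $\I^X$. The main (and fairly mild) obstacle is thus interpretive rather than computational — pinning down the meaning of the supremum in $\I^X$ and checking that it is legitimately attained — after which all three properties follow from the pointwise behaviour of the $f(x)$.
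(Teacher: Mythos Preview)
Your proposal is correct and follows the same approach as the paper: the paper does not give an explicit proof of this lemma, prefacing it only with the sentence ``Using the definition of $\hat{f}$ and the fact $f(x) \in \iota(A)$ we obtain following result,'' and your argument is precisely the pointwise verification that this sentence suggests. Your write-up is considerably more detailed than the paper's one-line justification, but the underlying idea---reducing each of the three properties of $\hat{f}$ to the corresponding property of $f(x)=ev_x\circ\hat{f}$ via the coordinatewise structure of $\I^X$---is the same.
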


Given any $G \in \T(X)$ its composite with $\hat{f}$ gives the ``pushforward'' map
\begin{equation}   \nonumber
 \begin{tikzpicture}[baseline=(current bounding box.center)]
         
         \node  (AY)  at (0,2)    {$\C(A,\Rbar)$};
         \node  (AX)  at (0,.0)    {$\I^X$};
          \node  (A)  at  (2,0)      {$\I$};
          
	\draw[->,left] (AY) to node [xshift=-5pt] {$\hat{f}$} (AX);

	\draw[->, right]  (AY) to node [xshift=5pt]  {$G \circ \hat{f}$} (A);
	\draw[->, below] (AX) to node {$G$} (A);
	
	 \end{tikzpicture}
 \end{equation}
which is a weakly averaging affine map preserving limits because the components defining it are and hence $G \circ \hat{f} \in \iota(A)$.  To prove that the monad $(\T,\eta,\mu)$ yields the functor part of the right Kan extension of the $\iota$ along $\iota$ we show that it coincides with the point-wise construction given in \cite[Theorem 1, page 237]{MacLane}.

\begin{thm} (The right Kan extension of $\iota$ along $\iota$ as a point-wise limit.)  For each $X \in_{ob} \M$,   $\T(X) =  \displaystyle{ \lim_{\leftarrow}} \left( \Xit \stackrel{Q}{\longrightarrow} \C \stackrel{\iota}{\longrightarrow} \M \right) $ with the natural transformation $\lambda$ of the cone $(\T(X),\lambda)$ over $\iota \circ Q$  specified by
\begin{equation}   \nonumber
 \begin{tikzpicture}[baseline=(current bounding box.center)]
         \node  (PX)  at  (-3.5,1.3)  {$\T(X)$};
         \node  (iB)  at    (2.,1.3)   {$\iota(A)$};
         
         \node  (IX)  at  (-4.5,0)  {$\I^X$};
         \node  (I0)  at  (-3,0) {$\I$};
         
         \node  (X)  at (0.5,0)    {$ \C(A,\Rbar)$};
         \node  (I2)   at    (3.5,0)   {$\I$};
                  
         \draw[->,above] (IX) to node {$G$} (I0);
	\draw[->,above] (X) to node {$G \circ \hat{f}$} (I2);
	
	\draw[|->,densely dashed] (I0) to node {} (X);
	\draw[->,above] (PX) to node {$\lambda_{f}$} (iB);

\end{tikzpicture}
 \end{equation}
 \noindent
 for every $f \in \M(X, \iota(A))$, every $A \in_{ob} \C$ and where $\hat{f}$ is defined by (\ref{hatDef}).     Each $\theta \in \M(Y,X)$  induces a unique arrow 
 \be \nonumber
\T(\theta): \displaystyle{ \lim_{\leftarrow}}(\iota \circ Q) \longrightarrow \displaystyle{ \lim_{\leftarrow}}(\iota \circ Q')
\ee
commuting with the limiting cones, where $Q': (Y \! \! \downarrow \! \iota) \rightarrow \C$ is the projection functor.  By
defining    
\mbox{ $\epsilon: \T \circ \iota \rightarrow \iota$} componentwise by
 \be \nonumber
\epsilon_A = \lambda_{id_{\iota(A)}} \quad \textrm{ for all }A \in_{ob} \C
\ee
the pair $(\T,\epsilon)$ is the right Kan extension of $\iota$ along $\iota$.
\end{thm}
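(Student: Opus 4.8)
The plan is to prove the statement in two stages: first that, for each fixed $X$, the pair $(\T(X),\lambda)$ is the limiting cone of $\iota\circ Q$ in $\M$, and then that these pointwise limits assemble into the right Kan extension via the standard pointwise formula \cite[Theorem~1, page~237]{MacLane}, the completeness of $\M$ being the only hypothesis there to invoke. For the first stage I would begin by confirming that $\lambda$ is a cone: for an arrow $k:(f,A)\to(g,B)$ of $\Xit$ (so $k\in_{ar}\C$ with $\iota(k)\circ f=g$) one needs $\iota(k)\circ\lambda_f=\lambda_g$, which under $\lambda_f(G)=G\circ\hat f$ reduces to the identity $\hat f\circ\C(k,\I)=\hat g$ of maps $\C(B,\I)\to\I^X$. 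This in turn is immediate from $\iota(k)\circ f=g$ together with the defining formula (\ref{hatDef}) for $\hat{\cdot}$, and is essentially the content of Lemma~\ref{cD}.

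The heart of the argument is the universal property, for which I would introduce a distinguished family of \emph{coordinate objects}. For each $\phi\in\I^X$ let $f_\phi:X\to\iota(\I)$ be the map $x\mapsto ev_{\phi(x)}$, where $ev_{\phi(x)}\in\iota(\I)=\C_w(\C(\I,\I),\I)$ is evaluation at the point $\phi(x)\in\I$; measurability follows as in Lemma~\ref{measI}, and by construction $\hat{f_\phi}[p]=p\circ\phi$, so that $\hat{f_\phi}[id_{\I}]=\phi$. Consequently $\lambda_{f_\phi}(G)[id_{\I}]=G(\phi)$, which shows at once that the legs $\{\lambda_f\}$ are jointly monic, since any $G\in\T(X)$ is recovered from its image in the product by reading off the $\I$-coordinate at $f_\phi$; this yields the uniqueness of any mediating map. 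Given an arbitrary cone $(W,(\sigma_f))$ over $\iota\circ Q$, I would then define the candidate mediating map $u:W\to\T(X)$ by $u(w)(\phi):=\sigma_{f_\phi}(w)[id_{\I}]$.

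Verifying that $u$ is well defined, lands in $\T(X)$, is measurable, and factorizes the cone is where the work lies. Writing $z_f:=\sigma_f(w)$ for fixed $w$, the crucial structural fact is that for any object $f:X\to\iota(A)$ and any $h\in\C(A,\I)$ the composite $\iota(h)\circ f$ equals the coordinate object $f_{\hat f[h]}$; this rests on the identity $\mu(p\circ h)=p(\mu(h))$ for $\mu=f(x)\in\iota(A)$ and affine $p$, which holds because elements of $\iota(A)$ are weakly averaging, affine and scale-homogeneous (the analogue on the convex dual $\C(A,\I)$ of Lemma~\ref{basic}). Feeding this into the cone condition $z_{\iota(h)\circ f}=\iota(h)(z_f)$ and evaluating at $id_{\I}$ gives $z_f[h]=u(w)(\hat f[h])$ for every $h$, hence $z_f=u(w)\circ\hat f=\lambda_f(u(w))$; so $u$ automatically satisfies $\lambda_f\circ u=\sigma_f$ for all objects. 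Affineness of $u(w)$ is obtained by comparing the coordinate objects of $\phi_1,\phi_2,\phi_1+_\alpha\phi_2$ with the single object $F:X\to\iota(\I\times\I)$, $x\mapsto ev_{(\phi_1(x),\phi_2(x))}$, and using affineness of $z_F$ on $\C(\I\times\I,\I)$ with $\pi_1+_\alpha\pi_2=m_\alpha$, where $m_\alpha(s,t)=s+_\alpha t$; the weakly averaging property follows by pushing $f_\phi$ along the constant affine map $\overline u:\I\to\I$ and invoking the weak averaging of $z_{f_\phi}\in\iota(\I)$. Measurability of $u$ reduces, via the generators $ev_{\chi_S}$ of the $\sigma$-algebra on $\T(X)$ (as in Theorem~\ref{monadIso}), to measurability of $w\mapsto\sigma_{f_{\chi_S}}(w)[id_{\I}]=(ev_{id_{\I}}\circ\sigma_{f_{\chi_S}})(w)$, a composite of cone legs and construction-measurable evaluations.

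I expect the genuine obstacle to be preservation of limits for $u(w)$, since, unlike affineness, it cannot be read off a single finite product object: each value $u(w)(\psi)$ is computed from its own coordinate object $f_\psi$. The intended remedy is to encode a monotone sequence $\phi_n\nearrow\phi$ as the points of the sequence convex space $A=\I^{\mathbb{N}}$ (restricted to increasing sequences), take $F:X\to\iota(A)$, $x\mapsto ev_{(\phi_n(x))_n}$, and use that the projections $\pi_n\in\C(A,\I)$ satisfy $\iota(\pi_n)\circ F=f_{\phi_n}$ while their monotone supremum $\pi_\infty=\sup_n\pi_n\in\C(A,\I)$ satisfies $\iota(\pi_\infty)\circ F=f_\phi$; preservation of limits for $z_F\in\iota(A)$ then transports to $u(w)(\phi)=\sup_n u(w)(\phi_n)$, which together with Corollary~\ref{convex} gives the required supremum over simple functions. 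With $(\T(X),\lambda)$ identified as the limiting cone and the bijection $G\mapsto(\lambda_f(G))_f$ shown to be an isomorphism in $\M$ in both directions, the remaining assertions are formal: a map $\theta\in\M(Y,X)$ induces a functor of slice categories by composition, hence by the universal property a canonical arrow between the pointwise limits, which one checks on coordinate objects to coincide with the pushforward action of $\T$; and, $\M$ being complete, \cite[Theorem~1, page~237]{MacLane} assembles the pointwise limits into the right Kan extension $(\Ri,\epsilon)=(\T,\epsilon)$ with counit component $\epsilon_A=\lambda_{id_{\iota(A)}}$, the leg of the limiting cone over $(\iota(A)\!\downarrow\!\iota)$ indexed by the object $id_{\iota(A)}$.
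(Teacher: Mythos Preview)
Your approach is essentially the paper's: the coordinate objects you call $f_\phi$ are exactly what the paper writes $\gamma'$ (equation~(\ref{primeConstruction})), the mediating map is defined identically (the paper sets $\theta(z)[\gamma]=\omega_{\gamma'}(z)[id_{\I}]$), and the weak-averaging and affineness verifications use the same auxiliary convex spaces $\I$ and $\I\times\I$ with the same morphisms $\overline u$, $\pi_1$, $\pi_2$, $\pi_1+_\alpha\pi_2$. You add two things the paper leaves tacit: the explicit check that $\lambda_f\circ u=\sigma_f$ for \emph{arbitrary} $f$ via the identity $\iota(h)\circ f=f_{\hat f[h]}$ (a nice observation, requiring the $\mu(p\circ h)=p(\mu(h))$ calculation you sketch), and the measurability of $u$. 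Both are welcome additions.

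The one substantive divergence is the ``preserves limits'' step. The paper does \emph{not} use a sequence space; instead it exploits that $\T(X)$ is itself a convex space, forms the object $\eta_X':X\to\iota(\T(X))$ from the unit of the monad, and uses the cone condition to rewrite $\theta(z)[f]=\omega_{\eta_X'}(z)[ev_f]$, so that the limiting behaviour of $\theta(z)$ along $\{f_i\}\to f$ reduces to limit-preservation of the \emph{single} functional $\omega_{\eta_X'}(z)\in\iota(\T(X))$ acting on $\{ev_{f_i}\}\to ev_f$, together with approximation of each $ev_{f_i}$ by simple functions on $\T(X)$. Your sequence-space idea is tidier in that it avoids the self-reference to $\T(X)$, but be careful: the paper's formal definition of ``preserves limits'' is phrased as a supremum over \emph{simple} functions, and in $\C(A,\I)$ for your $A$ (increasing sequences) the only simple affine maps are constants, so the passage from that definition to $z_F(\pi_\infty)=\sup_n z_F(\pi_n)$ is not literal; you are tacitly reading the condition as monotone convergence. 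Either make that identification explicit, or follow the paper and route the argument through $\iota(\T(X))$, where at least the evaluation functionals $ev_{f_i}$ admit simple approximants in the measurable sense.
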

\begin{proof}  The proof is broken into multiple parts which are denoted using italicized headings.

\begin{flushleft} \emph{ $(\T(X),\lambda)$ as a limit cone over $\iota \circ Q$} \end{flushleft}
Given the $\Xit$-arrows
\begin{equation} \nonumber
\begin{tikzpicture}[baseline=(current bounding box.center)]

         \node  (X)  at  (-1,0)    {$X$};
         \node  (iA)     at    (2,1.3)     {$\iota(A)$};
         \node  (iB)     at    (2,-1.3)     {$\iota(B)$};
         \node  (A)  at  (6.,1.3)     {$A$};
         \node  (B)  at  (6,-1.3)     {$B$};
         
         \node (com1)  at  (1,-2.2)   {in $\Xit$};
         \node  (com2) at  (6,-2.2)  {in $\C$};
         
	\draw[->,above] (X) to node [xshift=-8pt] {$f$} (iA);
	\draw[->,below] (X) to node [xshift=-8pt] {$g$} (iB);
	\draw[->,right] (iA) to node {$\iota(k)$} (iB);
	\draw[->,right] (A) to node {$k$} (B);
	
\end{tikzpicture}
 \end{equation}
 \noindent
the commutativity condition $g = \iota(k) \circ f$ implies  $\hat{g} = \hat{f} \circ \C(k,\I)$. This  gives the commutative diagram on the left in the figure below, which in turn makes $(\T(X),\lambda)$  a cone over $\iota \circ Q$ 
because for every $G \in \T(X)$  the diagram on the right 
\begin{equation}   \nonumber
 \begin{tikzpicture}[baseline=(current bounding box.center)]
 
        \node (CBI)  at  (-7,1.3)  {$\C(B,\I)$};
        \node (CAI)  at   (-7,-1.2)    {$\C(A,\I)$};
        \node (IX1)  at  (-4.55,1.3)  {$\I^X$};
        \node (IX2) at   (-4.5,-1.2) {$\I^X$};
        \node (I) at (-2.9,0)   {$\I$};
        
        \draw[->,left] (CAI) to node {$\C(k,\I)$} (CBI);
        \draw[->,above] (CBI) to node {$\hat{f}$} (IX1);
        \draw[->,above] (CAI) to node {$\hat{g}$} (IX2);
        \draw[->,above] (IX1) to node {$G$} (I);
        \draw[->,below] (IX2) to node {$G$} (I);
 
         \node  (PA)  at (-.5,0)    {$I^X \stackrel{G}{\longrightarrow} \I$};
         
         \node  (ph1)     at    (4.5,1.3)     {};
         \node  (ph2)     at    (4.5,-.6)    {};
         
         \node (AR)  at   (2.5,1.3)  {$\C(A,\Rbar)$};
         \node (I1)    at    (5.5,1.3)  {$\I$};
         
          \node (BR)  at   (2.5,-1.2)  {$\C(B,\Rbar)$};
         \node (I2)    at    (5.5,-1.2)  {$\I$};
                  
	\draw[|->,above] (PA) to node [xshift=-8pt] {$\lambda_{f}$} (iA);
	\draw[|->,below] (PA) to node [xshift=-8pt] {$\lambda_{g}$} (iB);
	\draw[|->,right] (ph1) to node {$\iota(k)$} (ph2);
	
	\draw[->,above] (AR) to node {$G \circ \hat{f}$} (I1);
	\draw[->,above] (BR) to node {$G \circ \hat{g}$} (I2);
	
\end{tikzpicture}
 \end{equation}
commutes.

Now suppose that $(Z, \omega)$ is also a cone over the functor $\iota \circ Q$.  We must show there exist a unique arrow $\theta$ making the diagram
\begin{equation}   \nonumber
 \begin{tikzpicture}[baseline=(current bounding box.center)]
         \node  (Z)  at  (-3,0)   {$Z$};
         \node  (PA)  at (-1,0)    {$\T(X)$};
         \node  (iA)     at    (2,1.3)     {$\iota(A)$};
         \node  (iB)     at    (2,-1.3)     {$\iota(B)$};
         \node  (A)  at  (6.,1.3)     {$A$};
         \node  (B)  at  (6,-1.3)     {$B$};
         
         \node (com1)  at  (1,-2)   {in $\M$};
         \node  (com2) at  (6,-2)  {in $\C$};
         
	\draw[->,right] (PA) to node [xshift=3pt,yshift=-3pt] {$\lambda_{f}$} (iA);
	\draw[->,right] (PA) to node [xshift=3pt,yshift=3pt] {$\lambda_{g}$} (iB);
	\draw[->,right] (iA) to node {$\iota(k)$} (iB);
	\draw[->,right] (A) to node {$k$} (B);
	
	\draw[->,out=50,in=180,above] (Z) to node {$\omega_{f}$} (iA);
	\draw[->,out=-50,in=180,below] (Z) to node {$\omega_{g}$} (iB);	
	\draw[->,dashed,above] (Z) to node {$\theta$} (PA);
\end{tikzpicture}
 \end{equation}
 \noindent
commute.  The commutativity of the outer path implies  
 \be \label{omegaCondition}
 \omega_{g}(z)[h] = \omega_{f}(z)[h \circ k] \quad  \forall h \in \C(B,\Rbar), \, \forall z \in Z.
 \ee
 
 Fix  an element element $z \in Z$.  To  satisfy the required commutativity condition $\omega_f(z) = \lambda_f  \circ \theta(z)$ for all objects $f: X \rightarrow \iota(A)$ in $\Xit$ it is necessary and sufficient that the function $\theta(z)$ satisfy
the commutativity of the whole  $\M$ diagram (given $g = \iota(k) \circ f$)
\begin{equation}   \nonumber
 \begin{tikzpicture}[baseline=(current bounding box.center)]
 
        \node (CBI)  at  (-7,1.3)  {$\C(A,\I)$};
        \node (CAI)  at   (-7,-1.2)    {$\C(B,\I)$};
        \node (IX1)  at  (-4.,1.3)  {$\I^X$};
        \node (IX2) at   (-4.,-1.2) {$\I^X$};
        \node (I) at (-2.,0)   {$\I$};
        
        \draw[->,left] (CAI) to node {$\C(k,\I)$} (CBI);
        \draw[->,above] (CBI) to node {$\hat{f}$} (IX1);
        \draw[->,below] (CAI) to node {$\hat{g}$} (IX2);
        \draw[->,above] (IX1) to node [xshift=2pt]{$\theta(z)$} (I);
        \draw[->,below] (IX2) to node {$\theta(z)$} (I);
        \draw[->,below] (CBI) to node [xshift=-22,yshift=5] {$\omega_f(z)$} (I);
         \draw[->,above] (CAI) to node {$\omega_g(z)$} (I);
\end{tikzpicture}
 \end{equation}
and that $\theta(z) \in \T(X)$.

This commutativity condition can be used to define $\theta(z)$ because, for the convex space $\I$ \emph{which is also measurable}, every  $\gamma \in \I^X$ determines a measurable map $\I^{\I} \rightarrow \I^X$, and using the SMCC property of $\M$, we obtain the 
 object  $\gamma':X \rightarrow \iota(\I)$ in  $\Xit$ specified by 
\be  \label{primeConstruction}
\gamma'(x)[h] = h(\gamma(x)) \quad \forall h \in \C(\I,\I), \, \forall x \in X.
\ee
The map $\gamma'(x) \in \iota(\I)$ because the convex structure is defined pointwise on $\I^{\I}$ and the weakly averaging condition is clearly satisfied by the above definition.
This map in turn,  via the construction in (\ref{hatDef}), determines the map \mbox{$\hat{\gamma'}:\C(\I,\I) \rightarrow \I^X$}  in $\M$ specified by
\be  \nonumber
\hat{ \gamma'}[h](x) = \gamma'(x)[h] = h(\gamma(x)) \quad  \forall h \in \C(\I,\I), \, \forall x \in X.
\ee
This map $\hat{\gamma'}$ is also a weakly averaging affine function because $h$ is affine and $\gamma(x)$ is weakly averaging for all $x \in X$.

Observe that for $\gamma \in \M(X,\I)$ it follows 
 $\hat{ \gamma'}[id_{\I}](x) = \gamma'(x)[id_{\I}] = \gamma(x)$  for all $x \in X$ and consequently $\gamma = \hat{ \gamma'}[id_{\I}]$.  Using this property  the required commutativity  condition $\omega_f(z) = \theta(z) \circ \hat{f}$ is used  to define $\theta(z)$ as
\be \label{defEta2}
\theta(z)[\gamma] = \theta(z)\left[\hat{\gamma'}[id_{\I}]\right] \stackrel{def}{=} \omega_{\gamma'}(z)[id_{\I}] \quad \forall z \in Z, \, \forall \gamma \in \M(X,\I)
\ee
or equivalently, for every $z \in Z$ the map $\theta(z)$ is defined \emph{as a function} by the commutativity of the diagram
\begin{equation}   \nonumber
 \begin{tikzpicture}[baseline=(current bounding box.center)]
         \node  (AY)  at (0,2)    {$\C(\I,\Rbar)$};
         \node  (AX)  at (0,.0)    {$\I^X$};
          \node  (A)  at  (2,0)      {$\I$};
          
	\draw[->,left] (AY) to node [xshift=-5pt] {$\hat{\gamma'}$} (AX);

	\draw[->, right]  (AY) to node [xshift=5pt]  {$\theta(z) \circ \hat{\gamma'} \stackrel{def}{=} \omega_{\gamma'}(z)$} (A);
	\draw[->, below] (AX) to node {$\theta(z)$} (A);
	
	 \end{tikzpicture}
 \end{equation}
for every $\gamma \in \M(X,\I)$.  

We now proceed to verify the condition that $\theta(z) \in \T(X)$ by proving  $\theta(z)$  is weakly averaging,  affine, and preserves limits.   

\begin{flushleft}\emph{Weakly averaging condition}\end{flushleft}  Let  $\overline{u} \in \M(X,\I)$ be a constant function with value $u \in \I$. Let $k \in \C(\I,\I)$ be the constant map $k = \overline{u}$ where we retain the  symbol ``$k$'' to avoid confusion between the two constant functions with value $u$.   For any $\gamma \in \M(X,\I)$  the diagram on the left in
 \begin{figure}[H]
\begin{equation}   \nonumber
 \begin{tikzpicture}[baseline=(current bounding box.center)]
         \node  (X)  at (0,0)    {$X$};
         \node  (Iu)  at (2,1)    {$\iota(\I)$};
         \node  (Iv)  at  (2,-1)      {$\iota(\I)$};
         \node (com1) at (1,-2)   {in $\Xit$};
          
	\draw[->,above] (X) to node [xshift=0pt] {$\gamma'$} (Iu);
	\draw[->, below]  (X) to node [xshift=0pt]  {$\overline{u}'$} (Iv);
	\draw[->, right] (Iu) to node {$\iota(k)$} (Iv);

         \node  (Z)  at (4,0)    {$Z$};
         \node  (I1)  at (6,1)    {$\iota(\I)$};
         \node  (I2)  at  (6,-1)      {$\iota(\I)$};
         \node  (com2) at  (5,-2)  {in $\M$};
          
	\draw[->,above] (Z) to node [xshift=0pt] {$\omega_{\gamma'}$} (I1);
	\draw[->, below]  (Z) to node [xshift=0pt]  {$\omega_{\overline{u}'}$} (I2);
	\draw[->, right] (I1) to node {$\iota(k)$} (I2);
		
	 \end{tikzpicture}
 \end{equation} 
     \caption*{Diagram 5.  Determining the values $\omega_{\overline{u}'}$.}
 \label{fig:constant}
  \end{figure}
 \noindent
commutes because, for all $x\in X$,  the composite map $\iota(k)(\gamma'(x))$ is the pushforward map
\begin{equation}   \nonumber
 \begin{tikzpicture}[baseline=(current bounding box.center)]
         \node  (C1)  at (0,0)    {$\C(\I,\I)$};
         \node  (C2)  at (0,.-2)    {$\C(\I,\I)$};
          \node  (I)  at  (2.5,0)      {$\I$};
          
	\draw[->,above] (C1) to node [xshift=0pt] {$\gamma'(x)$} (I);

	\draw[->, left]  (C2) to node [xshift=0pt]  {$\C(k,\I)$} (C1);
	\draw[->, right] (C2) to node {$\overline{u}'(x)$} (I);
	
         \node  (C12)  at (5,0)    {$h \circ k$};
         \node  (C22)  at (5,.-2)    {$h$};
          \node  (I2)  at  (9.5,0)      {$\gamma'(x)(h \circ k) = h(k(\gamma(x)) = h(u)$};
          
	\draw[|->,above] (C12) to node [xshift=0pt] {$$} (I2);

	\draw[|->, left]  (C22) to node [xshift=0pt]  {$$} (C12);
	\draw[|->, right] (C22) to node {$$} (I2);

	 \end{tikzpicture}.
 \end{equation}
Given the cone $(Z,\omega)$ over the functor $\iota \circ Q$ it follows that the diagram on the right in \mbox{Diagram 5} also commutes which entails that
\be \nonumber
\begin{array}{lcl}
\omega_{\overline{u}'}(z)[id_{\I}] &=&  \left(\iota(k) \circ \omega_{\gamma'}\right)(z)[id_{\I}]  \\
&=&  \omega_{\gamma'}(z)[k]  \\
&=&   \omega_{\gamma'}(z)[\overline{u}] \\
&=& u
\end{array}
\ee
where the last equality follows because  $\omega_{\gamma'}(z)$ is weakly averaging.  This shows that
\be  \nonumber
\theta(z)[\overline{u}] = \omega_{\overline{u}'}(z)[id_{\I}]=u
\ee
which proves $\theta(z)$ is weakly averaging.
  
\begin{flushleft} \emph{Affine condition}\end{flushleft} Consider the object $\I \times \I$ in $\C$ which has a convex structure defined componentwise by
\be \nonumber
\sum_{i=1}^n r_i (u_i,v_i) = (\sum_{i=1}^n r_i u_i, \sum_{i=1}^n r_i v_i) \quad \textrm{where }\sum_{i=1}^n r_i = 1, \, \, \forall r_i \in \I.
\ee
Let $\alpha \in \I$.  The map  $\pi_1 +_{\alpha} \pi_2 : \I \times \I \rightarrow \I$ defined by $\pi_1 +_{\alpha} \pi_2:(u,v) \mapsto u+_{\alpha}v$  is  affine because
\be \nonumber
\begin{array}{lcl}
\left( \pi_1 +_{\alpha} \pi_2\right)  \left( \sum_{i=1}^n r_i (u_i,v_i) \right) &=&  \alpha \sum_{i=1}^n r_i u_i + (1-\alpha) \sum_{i=1}^n r_i v_i  \\ 
&=&    \sum_{i=1}^n r_i (\alpha u_i) +  \sum_{i=1}^n r_i \left( (1-\alpha) v_i\right)  \\
&=&  \sum_{i=1}^n r_i (u_i +_{\alpha} v_i) \\
&=& \sum_{i=1}^n r_i \left( (\pi_1 +_{\alpha} \pi_2) (u_i, v_i)\right)
\end{array}
\ee
Moreover this map $\pi_1 +_{\alpha} \pi_2$ is also measurable with $\I \times \I$ having the product $\sigma$-algebra.

For $\gamma_1,\gamma_2 \in \M(X,\I)$ we obtain the induced maps $\gamma_1', \gamma_2' \in \M(X,\iota(\I))$ by the construction given in (\ref{primeConstruction}) and  the  diagram on the left in
 \begin{figure}[H]
\begin{equation}   \nonumber
 \begin{tikzpicture}[baseline=(current bounding box.center)]
         \node  (X)  at (-2,0)    {$X$};
         \node  (I1)  at (2,1.5)    {$\iota(\I)$};
         \node  (I2)  at  (2,-1.5)      {$\iota(\I)$};
         \node   (II)  at  (2,0)     {$\iota(\I \times \I)$};
         \node (com1) at (1,-2.5)   {in $\Xit$};
          
	\draw[->,above] (X) to node [xshift=0pt] {$\gamma_1^{'}$} (I1);
	\draw[->, below]  (X) to node [xshift=0pt]  {$\gamma_2^{'}$} (I2);
	\draw[->, above]  (X) to node [xshift=13pt]  {${\langle \gamma_1,\gamma_2 \rangle}^{'}$} (II);
	\draw[->, right] (II) to node {$\iota(\pi_1)$} (I1);
	\draw[->, right] (II) to node {$\iota(\pi_2)$} (I2);

         \node  (Z)  at (4,0)    {$Z$};
         \node  (I12)  at (8,1.5)    {$\iota(\I)$};
         \node  (I22)  at  (8,-1.5)      {$\iota(\I)$};
         \node  (II2)  at   (8,0)    {$\iota(\I \times \I)$};
         \node  (com2) at  (6,-2.5)  {in $\M$};
          
	\draw[->,above] (Z) to node [xshift=0pt] {$\omega_{\gamma_1'}$} (I12);
	\draw[->, below]  (Z) to node [xshift=0pt]  {$\omega_{\gamma_2'}$} (I22);
	\draw[->,above] (Z) to node [xshift=13pt] {$\omega_{\langle \gamma_1,\gamma_2 \rangle'}$} (II2);
	\draw[->, right] (II2) to node {$\iota(\pi_1)$} (I12);
	\draw[->, right] (II2) to node {$\iota(\pi_2)$} (I22);
		
	 \end{tikzpicture}
 \end{equation} 
     \caption*{Diagram 6.  Determining the affine property.}
 \label{fig:aff}
  \end{figure}
 \noindent
commutes.  Hence, given the cone $(Z, \omega)$ over the functor $\iota \circ Q$, the diagram on the right also commutes,  yielding the two equations
\be \nonumber
\omega_{\langle \gamma_1,\gamma_2 \rangle'}(z)[\pi_1] = \omega_{\gamma_1'}(z)[id_{\I}] \quad \textrm{and} \quad \omega_{\langle \gamma_1,\gamma_2 \rangle'}(z)[\pi_2] = \omega_{\gamma_2'}(z)[id_{\I}]
\ee
for all $z \in Z$.
As $\omega_{\langle \gamma_1, \gamma_2 \rangle'}(z)$ is affine it follows, using the two above equations, that
\be \label{affineproperty}
\omega_{\langle \gamma_1,\gamma_2 \rangle'}(z)[\pi_1 +_{\alpha} \pi_2] = \omega_{\gamma_1'}(z)[id_{\I}]  +_{\alpha} \omega_{\gamma_2'}(z)[id_{\I}]  \quad \forall z \in Z
\ee

Now using the commutative diagram on the left in 
\begin{equation}   \nonumber
 \begin{tikzpicture}[baseline=(current bounding box.center)]
         \node  (X)  at (-1,0)    {$X$};
         \node  (II)  at (2,0)    {$\iota(\I \times \I)$};
         \node  (I)  at  (2,-1.5)      {$\iota(\I)$};
         \node (com1) at (1,-2.5)   {in $\Xit$};
          
	\draw[->, above]  (X) to node [xshift=9pt]  {${\langle \gamma_1,\gamma_2 \rangle}^{'}$} (II);
	\draw[->, right] (II) to node {$\iota(\pi_1 +_{\alpha} \pi_2)$} (I);
	\draw[->, left] (X) to node [yshift=-5pt]{$(\gamma_1 +_{\alpha} \gamma_2)'$} (I);

         \node  (Z)  at (5,0)    {$Z$};
         \node  (II2)  at   (8,0)    {$\iota(\I \times \I)$};
         \node  (I2)  at  (8,-1.5)      {$\iota(\I)$};
         \node  (com2) at  (6,-2.5)  {in $\M$};
          
	\draw[->, below]  (Z) to node [xshift=-13pt]  {$\omega_{(\gamma_1 +_{\alpha} \gamma_2)'}$} (I2);
	\draw[->,above] (Z) to node [xshift=4pt] {$\omega_{\langle \gamma_1,\gamma_2 \rangle'}$} (II2);
	\draw[->, right] (II2) to node [xshift=0pt]{$\iota(\pi_1 +_{\alpha} \pi_2)$} (I2);
		
	 \end{tikzpicture}
 \end{equation} 
 \noindent
it follows that the diagram on the right also must commute which gives the equation
\be \label{additivity}
\omega_{(\gamma_1 +_{\alpha} \gamma_2)'}(z)[id_{\I}] = \omega_{\langle \gamma_1,\gamma_2 \rangle'}(z)[\pi_1 +_{\alpha} \pi_2].
\ee
Combining the last two results and using the definition of $\theta(z)$ it follows that
\be \nonumber
\begin{array}{lcll}
\theta(z)(\gamma_1 +_{\alpha} \gamma_2) &=& \omega_{(\gamma_1 +_{\alpha} \gamma_2)'}(z)[id_{\I}]  & \textrm{ by def. of }\theta(z)(\gamma_1+_{\alpha} \gamma_2) \\
&=& \omega_{\langle \gamma_1,\gamma_2 \rangle'}(z)[\pi_1 +_{\alpha} \pi_2]  & \textrm{ by }(\ref{additivity})\\
&=&  \omega_{\gamma_1'}(z)[id_{\I}]  +_{\alpha} \omega_{\gamma_2'}(z)[id_{\I}] & \textrm{ by } (\ref{affineproperty}) \\
&=& \theta(z)(\gamma_1) +_{\alpha} \theta(z)(\gamma_2) & \textrm{ by def. of }\theta(z)(\gamma_1) \, \textrm{and} \, \theta(z)(\gamma_1)
\end{array}
\ee
which shows that $\theta(z)$ is affine for all $z \in Z$.

\begin{flushleft}\emph{Preserves Limits} \end{flushleft}
Consider the commutative $\M$-diagram on the left 
\begin{equation}   \nonumber
 \begin{tikzpicture}[baseline=(current bounding box.center)]
         \node  (X)  at  (-1,0)   {$X$};
         \node  (TX)  at (2,-1)    {$\T(X)$};
         \node  (I)     at    (2,1)     {$\I$};
         \node (com1) at (0,-2)  {in $\M$};
          
	\draw[->,left] (X) to node [xshift=3pt,yshift=5pt] {$f$} (I);
	\draw[->,below] (X) to node [xshift=3pt,yshift=0pt] {$\eta_X$} (TX);
	\draw[->,right] (TX) to node {$ev_f $} (I);
	
	 \node  (X2)  at  (6,0)   {$X$};
         \node  (TX2)  at (9,-1)    {$\iota(\T(X))$};
         \node  (I2)     at    (9,1)     {$\iota(\I)$};
         \node  (com2)  at  (8,-2)  {in $\Xit$};
          
	\draw[->,left] (X2) to node [xshift=3pt,yshift=5pt] {$f'$} (I2);
	\draw[->,below] (X2) to node [xshift=3pt,yshift=0pt] {${\eta_X}'$} (TX2);
	\draw[->,right] (TX2) to node {$\iota(ev_f)$} (I2);

\end{tikzpicture}
 \end{equation}
where $\eta_X$ is the unit of the monad $\T$ mapping $x \mapsto ev_x$ (corresponding to the Dirac measure at $x$ when viewed in $\G(X)$).  Since  $ev_f$ is an affine mapping between the two convex spaces $\T(X)$ and $\I$ this gives the induced commutative diagram on the right in the category $\Xit$.  Thus we obtain the commutative $\Xit$-diagram
\begin{equation}   \nonumber
 \begin{tikzpicture}[baseline=(current bounding box.center)]
         \node  (Z)  at  (-4,0)   {$Z$};
         \node  (X)  at  (-2,0)   {$\T(X)$};
         \node  (TX)  at (1,-1)    {$\iota(\T(X))$};
         \node  (I)     at    (1,1)     {$\iota(\I)$};
          
	\draw[->,left] (X) to node [xshift=3pt,yshift=5pt] {$\lambda_{f'}$} (I);
	\draw[->,below] (X) to node [xshift=-3pt,yshift=1pt] {$\lambda_{{\eta_X}'}$} (TX);
	\draw[->,right] (TX) to node {$\iota(ev_f)$} (I);
	\draw[->,above] (Z) to node {$\theta$} (X);
	\draw[->,above, out=60,in=180,looseness=.5] (Z) to node {$\omega_{f'}$} (I);
	\draw[->,below, out=-60,in=180,looseness=.5] (Z) to node {$\omega_{{\eta_X}'}$} (TX);
\end{tikzpicture}
\end{equation}

This shows that for every $f \in \M(X,\I)$ the condition
\be \nonumber
\omega_{f'}(z) = \omega_{{\eta_X}'}(z)[ev_f]
\ee
holds.  For any  sequence of simple functions  \mbox{$\{f_i\}_{i=1}^{\infty} \rightarrow f$}  it follows, for all $G \in \T(X)$  the condition $\{ev_{f_i}(G) \}_{i=1}^{\infty} \rightarrow ev_f(G)$ holds by the monotone convergence theorem so $\{ev_{f_i} \}_{i=1}^{\infty} \rightarrow ev_f$.   But each function $ev_{f_i}:\T(X) \rightarrow \I$ is also a measurable function so is the limit of a sequence of simple functions $\{g_{i,j}\}_{j=1}^{\infty} \rightarrow ev_{f_i}$.  Consequently we can obtain a sequence of simple measurable functions $\{g_k\}_{k=1}^{\infty} \rightarrow ev_f$.
Since \mbox{$\theta(z)[f_i] \stackrel{\triangle}{=} \omega_{f_i'}(z)[id_{\I}]=  \omega_{\eta_X'(z)}[ev_{f_i}]$} we obtain
\be \nonumber
\lim \{ \theta(z)[f_i] \}_{i=1}^{\infty}  = \lim \{ \omega_{\eta_X'}(z)[g_k]\}_{k=1}^{\infty}   \rightarrow \omega_{\eta_X'}(z)[ev_f]=\theta(z)[f]
\ee
where the convergence follows from the fact $\omega_{\eta_{X}'}(z) \in \iota(\T(X))$ and hence preserves limits.

\begin{flushleft} \emph{Functoriality of $\T$}\end{flushleft}
The object $X$ with all the $\Xit$ arrows gives a cone over the functor $\iota \circ Q$ and the unique arrow $\eta: X \rightarrow \T(X)$ making the diagram
\begin{figure}[H]
\begin{equation}   \nonumber
 \begin{tikzpicture}[baseline=(current bounding box.center)]
         \node  (Z)  at  (-3,0)   {$X$};
         \node  (PA)  at (-1,0)    {$\T(X)$};
         \node  (iA)     at    (2,1.3)     {$\iota(A)$};
         \node  (iB)     at    (2,-1.3)     {$\iota(B)$};
         \node  (A)  at  (6.,1.3)     {$A$};
         \node  (B)  at  (6,-1.3)     {$B$};
         
         \node (com1)  at  (1,-2)   {in $\M$};
         \node  (com2) at  (6,-2)  {in $\C$};
         
	\draw[->,right] (PA) to node [xshift=3pt,yshift=-3pt] {$\lambda_{f}$} (iA);
	\draw[->,right] (PA) to node [xshift=3pt,yshift=3pt] {$\lambda_{g}$} (iB);
	\draw[->,right] (iA) to node {$\iota(k)$} (iB);
	\draw[->,right] (A) to node {$k$} (B);
	
	\draw[->,out=50,in=180,above] (Z) to node {$f$} (iA);
	\draw[->,out=-50,in=180,below] (Z) to node {$g$} (iB);	
	\draw[->,dashed,above] (Z) to node {$\eta$} (PA);
\end{tikzpicture}
 \end{equation}
    \caption*{Diagram 7.  The unit of the monad $\T$ at component $X$.}
 \label{fig:unit}
 \end{figure}
 \noindent

 \noindent
commute is precisely the unit of the monad $\T$ at the component $X$, $\eta=\eta_X$, because
\be \nonumber
\begin{array}{lcl}
(\lambda_{g} \circ \eta_X)(x) &=& ev_x \circ \hat{g} \\
&=&  g(x) \\
&=& f(x) \circ k  \quad \textrm{because  } g=\iota(k) \circ f\\
&=&  ev_x \circ \hat{f} \circ k \\
&=&(\iota(k) \circ \lambda_{f} \circ \eta_X)(x) 
\end{array}.
\ee
Consequently, by precomposition of the cone with vertex $X$ shown in Diagram 7, each $\theta \in \M(Y,X)$ induces a cone with vertex $Y$ over $\iota \circ Q$ and hence uniquely determines an arrow 
\be \nonumber
\T(\theta): \displaystyle{ \lim_{\leftarrow}}(\iota \circ Q) \longrightarrow \displaystyle{ \lim_{\leftarrow}}(\iota \circ Q')
\ee
where $Q': (Y \! \! \downarrow \! \iota) \rightarrow \C$ is the projection functor, making $\T$ functorial in the above construction which coincides with the previously defined operation of $\T$ on $\M$ arrows, i.e., as the pushforward map.

Having established that for each measurable space $X$ the pair $(\T(X), \{\lambda_g\}_{g \in_{ob} \Xit})$ forms a limiting cone over the functor $\iota \circ Q$ the rest of the proof  now follows the proof  of \cite[Theorem  1, p233]{MacLane} verbatim for the general construction of the pointwise right Kan extension of $\iota$ along $\iota$.  We give the proof showing the naturality of $\epsilon$, which expands upon the proof given by MacLane, and refer the reader to MacLanes proof  that if \mbox{$\mS: \M \rightarrow \M$} is another functor with $\alpha:\mS \circ \iota \rightarrow \iota$ a natural transformation then it corresponds bijectively with a natural transformation $\overline{\alpha}: \mS \rightarrow \T$.  This result simply depends upon $(\T(X), \{\lambda_{g}\}_{g \in_{ob} \Xit})$  being a limiting cone over the functor $\iota \circ Q$ and the functoriality of $\T$. 

\begin{flushleft} \emph{Defining the universal arrow $\epsilon$} \end{flushleft}
Let $A$ be an object of $\C$.   The identity map \mbox{$id_{\iota(A)}: \iota(A) \rightarrow \iota(A)$} is an object in the slice category $(\iota(A) \! \! \downarrow \! \! \iota)$ and, just as we defined in (\ref{hatDef}),  using  the SMCC structure of $\M$ we obtain a measurable map
 \be \nonumber
 \begin{tikzpicture}[baseline=(current bounding box.center)]
	
	 \node	(CAI) 	at	(0,0)              {$\C(A,\I)$};
	\node	(IiA)	at	(7.4,0)	               {$\I^{\iota(A)}$};
	
	\node        (A)   at     (-1,-1.3)       {$A$};
	\node        (I1)  at    ( 1,-1.3)       {$\I$};
	
	\node        (C)  at    ( 5, -1.3)      {$\iota(A)$};
	\node	(I2)	at	(10,-1.3)     {$\I$};

	\draw[->, above] (CAI) to node  {$\hat{id}_{\iota(A)}$} (IiA);
	\draw[->,above] (A) to node {$h$} (I1);
	
	\draw[|->,densely dashed] (I1) to node {} (C);
	\draw[->,above] (C) to node {$\hat{id}_{\iota(A)}(h)$} (I2);
	
	\node (K)    at  (5,-2.3)   {$K$};
	\node  (Kh) at  (10,-2.3) {$K(h)$};
	\draw[|->] (K) to node {} (Kh);

 \end{tikzpicture}
\ee
from which we see $\hat{id}_{\iota(A)}(h) = ev_h$, the evaluation map at $h$.   Corresponding to this object in the slice category $(\iota(A) \! \! \downarrow \! \! \iota)$ there is 
the component map $\lambda_{id_{\iota(A)}}:\T(\iota(A)) \rightarrow \iota(A)$ of the natural transformation $\lambda$, of the universal cone $(\T(\iota(A)),\lambda)$ over the functor $\iota \circ Q''$, where $Q'': (\iota(A) \! \! \downarrow \! \! \iota) \rightarrow \C$ is the projection functor.  
 The universal arrow $\epsilon$  is defined  componentwise at $A$ by 
\be \nonumber
 \begin{tikzpicture}[baseline=(current bounding box.center)]
	
	 \node	(PiA) 	at	(0,0)              {$\T(\iota(A))$};
	\node	(iA)	at	(7.4,0)	               {$\iota(A)$};
	\node        (IiA)   at     (-1,-.9)       {$\I^{\iota(A)}$};
	\node        (I1)  at    ( 1,-.9)       {$\I$};
	\node        (C)  at    ( 5, -.9)      {$\C(A,\I)$};
	\node         (ICiA) at   (7.9,-.9)    {$\I^{\iota(A)}$};
	\node	(I2)	at	(10,-.9)     {$\I$};

	\draw[->, above] (PiA) to node  {$\epsilon_A = \lambda_{id_{\iota(A)}}$} (iA);
	\draw[->,above] (IiA) to node {$G$} (I1);
	\draw[|->,densely dashed] (I1) to node {} (C);
	\draw[->,above] (C) to node {$\hat{id}_{\iota(A)}$} (ICiA);
	\draw[->,above] (ICiA) to node {$G$} (I2);

 \end{tikzpicture}
\ee
hence for all $G \in \T(\iota(A))$ the map $\epsilon_A(G)$ is specified  by $\epsilon_A(G)[h] = G(ev_h)$ for all \mbox{$h \in \C(A,\I)$}, and $\epsilon_A$  is a measurable weakly averaging affine map because both component maps are measurable weakly averaging affine maps.

\begin{flushleft} \emph{Naturality of $\epsilon$} \end{flushleft}
 For $k:A \rightarrow B$  a $\C$ morphism the naturality of $\epsilon$ requires the $\M$ diagram
\begin{figure}[H]
\begin{equation}   \nonumber
 \begin{tikzpicture}[baseline=(current bounding box.center)]
          
         \node  (PX)  at (-1.0,.0)    {$\T(\iota(A))$};
         \node  (X)  at  (3,.0)   {$\iota(A)$};
         \node  (PY)  at (-1.0,-1.8)    {$\T(\iota(B))$};
         \node (Y)  at (3,-1.8)   {$\iota(B)$};

	\draw[->,left] (PX) to node [yshift=5pt]{$\T(\iota(k))$} (PY);
	\draw[->,above] (PX) to node {$\epsilon_A$} (X);
	\draw[->,above] (PY) to node {$\epsilon_B$} (Y);
	\draw[->, right]  (X) to node {$\iota(k)$} (Y);

         \node  (PX2)  at (-1,.-3)    {$G$};
         \node  (X2)  at  (7.7,.-3)   {$\epsilon_A(G)$};
         \node  (PY2)  at  (-1,-4.8)   {$G \circ \C(\iota(k),\I)$};
          \node (Y2)  at (6,-4.8)   {$\epsilon_B(G \circ \C(\iota(k),\I))=\epsilon_A(G)\circ \C(k,\I)$};
          \node (ph)  at  (7.7,-4.7)  {};

	\draw[|->,above] (PX2) to node {$$} (PY2);
	\draw[|->,left] (PX2) to node {$$} (X2);
	\draw[|->,left] (PY2) to node {$$} (Y2);
	\draw[|->, above]  (X2) to node {}  (ph);

\end{tikzpicture}
 \end{equation}
  \caption*{Diagram 8.  Requirements for the naturality of $\epsilon$.}
 \label{fig:barycenter}
 \end{figure}
 \noindent
 to commute.
 Evaluating the  expression at the bottom right in the diagram at the affine morphism  $h: B \rightarrow \Rbar$ gives
 \be  \nonumber
 \begin{array}{lcl}
 \left(\epsilon_A(G)\circ \C(k ,\I)\right)[h] &=& \epsilon_A(G)[h \circ k] \\
 &=& G(ev_{h \circ k})  \\
 &=&  G(ev_h \circ \C(k,\I)) \quad \textrm{by Lemma~\ref{cD}} \\
 &=&  (G \circ \C(\iota(k),\I))(ev_h)  \\
 &=& \left( \lambda_{id_{\iota(B)}}(G \circ \C(\iota(k),\I))\right)[h] \\
 &=& \epsilon_B(\T(\iota(k))[G] )[h] \\
 &=& \epsilon_B(G \circ \C(\iota(k),\I))[h]
 \end{array}
 \ee
and hence $\epsilon$ is a natural transformation.
\end{proof}
To see that the monad $(\T, \eta, \mu)$ is a codensity monad of $\iota$ it suffices to show that the multiplication $\mu$, which defines a cone over $\iota \circ Q$ in the above proof, 
\begin{equation}   \nonumber
 \begin{tikzpicture}[baseline=(current bounding box.center)]
         \node  (Z)  at  (-4,0)   {$\T(\T(X))$};
         \node  (PA)  at (-1,0)    {$\T(X)$};
         \node  (iA)     at    (2,1.3)     {$\iota(A)$};
         \node  (iB)     at    (2,-1.3)     {$\iota(B)$};
         \node  (A)  at  (6.,1.3)     {$A$};
         \node  (B)  at  (6,-1.3)     {$B$};
         
         \node (com1)  at  (1,-2.5)   {in $\M$};
         \node  (com2) at  (6,-2.5)  {in $\C$};
         
	\draw[->,right] (PA) to node [xshift=3pt,yshift=-3pt] {$\lambda_{f}$} (iA);
	\draw[->,right] (PA) to node [xshift=3pt,yshift=3pt] {$\lambda_{g}$} (iB);
	\draw[->,right] (iA) to node {$\iota(k)$} (iB);
	\draw[->,right] (A) to node {$k$} (B);
	
	\draw[->,out=30,in=180,above] (Z) to node {$\mu_X(\_) \circ \hat{f}$} (iA);
	\draw[->,out=-30,in=180,below] (Z) to node {$\mu_X(\_) \circ \hat{g}$} (iB);	
	\draw[->,dashed,above] (Z) to node {$\theta$} (PA);
\end{tikzpicture}
 \end{equation}
and the resulting (unique) map $\theta$ is in fact $\theta = \mu_X$.  The proof is straightforward.

 \begin{flushleft}
Kirk Sturtz \\
Universal Mathematics \\
Dayton, OH USA \\
\email{kirksturtz@UniversalMath.com} \\
\end{flushleft}


\begin{thebibliography}{99}
\bibitem [Avery, 2014]{Avery} T. Avery, Codensity and the Giry monad.  \url{http://arxiv.org/abs/1410.4432}, 2014.


\bibitem[Doberkat, 2004]{Doberkat} E.E. Doberkat,  Eilenberg-Moore algebras for stochastic relations. Information and Computation, 204 (2006), 1756-1781.

\bibitem [Fritz, 2009]{Fritz} T. Fritz, Convex Spaces I: Definitions and Examples.  \url{http://arxiv.org/abs/0903.5522}, 2009.

\bibitem [Giry, 1982]{Giry} M. Giry, A categorical approach to probability theory, in Categorical Aspects of Topology and Analysis, Vol. 915,  68-85, Springer-Verlag, 1982. 

\bibitem [Kock, 1972]{Kock} A. Kock, Strong Functors and Monoidal Monads, Archiv der Math. 23 (1972), 113-120.

\bibitem [Lawvere and Rosebrugh, 2005]{LR} F.W. Lawvere and R. Rosebrugh, \textit{Sets for Mathematics}, Cambridge University Press, 2005.

\bibitem  [Leinster, 2013]{Leinster} T. Leinster, Codensity and the ultrafilter monad. Theory and Applications of Categories, Vol. 28, No. 13,  332 - 370 (2013).


\bibitem[MacLane, 1971]{MacLane} S. MacLane, Categories for the Working Mathematician, Springer-Verlag, 1971. 

\bibitem  [Meng, 1987]{Meng} X. Q. Meng, Categories of Convex Sets and of Metric Spaces, with Applications to Stochastic Programming.  Dissertation, SUNY Buffalo, 1987. 

\end{thebibliography}
 \end{document}